\newtheorem{thm}{Theorem}[section]
\newtheorem{lem}[thm]{Lemma}
\theoremstyle{definition}
\newtheorem{defin}[thm]{Definition}
\newtheorem{rem}[thm]{Remark}
\newtheorem{exa}[thm]{Example}
\numberwithin{equation}{section}
\DeclareMathOperator{\diam}{diam}
\newcommand{\N}{\mathbb{N}}
\newcommand{\Q}{\mathbb{Q}}
\newcommand{\R}{\mathbb{R}}
\renewcommand{\lvert}{\left\vert}
\renewcommand{\rvert}{\right\vert}
\renewcommand{\lVert}{\left\Vert}
\renewcommand{\rVert}{\right\Vert}
\title[Non-normal numbers]
{Non-normal numbers with respect to infinite Markov partitions}
\author[M. G. Madritsch]{Manfred G. Madritsch}
\address[M. G. Madritsch]{1. Universit\'e de Lorraine, Institut Elie Cartan de Lorraine, UMR 7502, Vandoeuvre-l\`es-Nancy, F-54506, France;
2. CNRS, Institut Elie Cartan de Lorraine, UMR 7502, Vandoeuvre-l\`es-Nancy, F-54506, France}
\email{manfred.madritsch@univ-lorraine.fr}
\subjclass[2010]{Primary: 11K16, 37B10; Secondary: 11A63, 54H20.}
\keywords{non-normal numbers, Markov partitions, symbolic dynamical
  system, Baire category.}
\date{\today}
\begin{document}

\begin{abstract}
  In the present paper we investigate two special types of non-normal
  numbers. On the one hand we call a number extremely non-normal if the
  set of accumulation points of its frequency of blocks vector is the
  full set of shift invariant probability vectors. On the other hand
  we call a number having maximal oscillation frequency if for any
  fixed block the set of accumulation points of its frequency vector
  is the full set of possible probability vectors. The goal is to
  investigate the Baire category of these numbers for infinite Markov
  partitions. 
\end{abstract}

\maketitle

\section{Introduction}\label{sec:introduction}
Let $\mathbb{I}$ denote the irrational numbers in the unit interval,
\textit{i.e.}, $\mathbb{I}:=[0,1]\setminus\Q$. Then every
$x\in\mathbb{I}$ can be represented in a unique way as an infinite
simple continued fraction, namely
\[
x=[a_1(x),a_2(x),a_3(x),\ldots]=\cfrac{1}{a_1(x)+\cfrac{1}{a_2(x)+\cfrac{1}{a_3(x)+\ddots}}},
\]
where $a_k(x)\in\N$ for $k\geq1$.

In the present paper we investigate the limiting frequency of blocks
of $a_k(x)$. For $k\geq1$ a positive integer and a block $\mathbf{b}=b_1\ldots
b_k\in\N^k$ we denote by $\Pi(x,\mathbf{b},n)$ the number of
occurrences of this block $\mathbf{b}$ among the first $n$ digits of
$x$, \textit{i.e.}
\[
\Pi(x,\mathbf{b},n):=\frac{\{0\leq i<n:a_{i+1}(x)=b_1,\ldots,a_{i+k}(x)=b_k\}}{n}.
\]
Furthermore we denote by
\[
  \Pi_k(x,n)=\left(\Pi(x,\mathbf{b},n)\right)_{\mathbf{b}\in\N^k}
\]
the vector of frequencies $\Pi(x,\mathbf{b},n)$ for all blocks
$\mathbf{b}$ of length $k$.

For digits (blocks of length 1) a famous result of L\'evy \cite{levy1929:sur_les_lois} states that for Lebesgue almost
all $x\in\mathbb{I}$ we have
\begin{gather}\label{mani:levy}
\Pi(x,b,n)\rightarrow\frac1{\log 2}\log\frac{(b+2)^2}{b(b+2)}
\end{gather}
for all $b\in\N$. In analogy with normal numbers in $q$-adic number
systems (\textit{cf.}
\cite{Kuipers_Niederreiter1974:uniform_distribution_sequences} or
\cite{drmota_tichy1997:sequences_discrepancies_and}) we call a
$x\in\mathbb{I}$ simple (continued-fraction-)normal if it satisfies
\eqref{mani:levy}.

We can extend this notion to (continued-fraction-)normal numbers by
using the Gauss measure defined
by $$\mu(A)=\frac1{\log2}\int_A\frac1{1+x}\mathrm{d}x,$$ where
$A\subset[0,1]$ is a Lebesgue measurable set. Then we call a number
(continued-fraction-)normal if the asymptotic frequency of its block
of digits is determined by the Gauss measure. Now an application of
Birkhoff's ergodic theory (\textit{cf.}
\cite{billingsley1965:ergodic_theory_and} or
\cite{dajani_kraaikamp2002:ergodic_theory_numbers}) yields that
allmost all numbers are (continued-fraction-)normal with respect to
the Lebesgue measure.

In the present paper we want to focus on numbers which are
described by their frequency of blocks of digits. For each
$k\geq1$, we define the simplex of all probability vectors $\Delta_k$ by
\[
  \Delta_k=\left\{(p_{\mathbf{i}})_{\mathbf{i}\in\N^k}:p_{\mathbf{i}}\geq0,
  \sum_{\mathbf{i}\in\N^k}p_{\mathbf{i}}=1\right\}.
\]
The set $\Delta_k$ together with the $1$-norm $\lVert\cdot\rVert_1$ defined by
$$\lVert \mathbf{p}-\mathbf{q}\rVert_1=
\sum_{\mathbf{i}\in\N^k}\lvert p_\mathbf{i}-q_\mathbf{i}\rvert.$$
is a compact metric space.

On the one hand we clearly have that any vector $\mathbf{\Pi}_k(\omega,n)$ of frequencies of
blocks of digits of length $k$ belongs to $\Delta_k$. On the other
hand any probability vector needs to be shift invariant (\textit{cf.}  Volkmann
\cite{volkmann1958:uber_hausdorffsche_dimensionen6} or Olsen
\cite{olsen2004:extremely_non_normal,olsen2003:extremely_non_normal}). Therefore
we define the
subsimplex of shift invariant probability vectors $\mathrm{S}_k$ by
\[
\mathrm{S}_k:=\left\{(p_{\mathbf{i}})_{\mathbf{i}\in\N^k}:
p_{\mathbf{i}}\geq0,\sum_{\mathbf{i}\in\N^k}p_{\mathbf{i}}=1,
\sum_{i\in\N}p_{i\mathbf{i}}=\sum_{i\in\N}p_{\mathbf{i}i}\text{ for
  all }\mathbf{i}\in\N^{k-1}\right\}.
\]

Now we want to focus on two different examples of non-normal sets. Our
first one is the set of extremely non-normal numbers $\mathbb{E}$
introduced by Olsen \cite{olsen2003:extremely_non_normal}. We call a
number $x\in\mathbb{I}$ extremely non-$k$-normal if each probability
vector $\mathbf{p}\in\mathrm{S}_k$ is an accumulation point of the
sequence of vectors of frequencies
$(\Pi_k(x,n))_{n\in\N}$. Furthermore we call $x\in\mathbb{I}$
extremely non-normal if it is extremely non-$k$-normal for every
$k\geq1$. Then Olsen \cite{olsen2003:extremely_non_normal} could prove
the following
\begin{thm}[{\cite[Theorem 1]{olsen2003:extremely_non_normal}}]
The set of extremely non-normal numbers is residual.
\end{thm}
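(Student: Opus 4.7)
The plan is to exhibit $\mathbb{E}$ as a countable intersection of dense open subsets of $\mathbb{I}$. Via the continued-fraction coding $x\mapsto(a_n(x))_{n\geq1}$, the space $\mathbb{I}$ is homeomorphic to $\N^{\N}$ equipped with the product topology, hence Polish and in particular a Baire space; the cylinders $[c_1,\dots,c_N]:=\{x\in\mathbb{I}:a_i(x)=c_i,\,1\leq i\leq N\}$ form a countable basis of open sets. For each $k\geq1$ the metric space $(\mathrm{S}_k,\lVert\cdot\rVert_1)$ is separable, so I fix a countable dense subset $\{\mathbf{p}^{(k,j)}\}_{j\geq1}\subset\mathrm{S}_k$ and set
\[
U_{k,j,m}:=\left\{x\in\mathbb{I}\,:\,\exists\,n\geq m\text{ with }\lVert\Pi_k(x,n)-\mathbf{p}^{(k,j)}\rVert_1<1/m\right\}.
\]
Unwinding the definition of extreme non-normality and invoking the density of $\{\mathbf{p}^{(k,j)}\}_j$ inside each $\mathrm{S}_k$ yields $\mathbb{E}=\bigcap_{k,j,m\in\N}U_{k,j,m}$. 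Each $U_{k,j,m}$ is open: for fixed $n$, $\Pi_k(x,n)$ depends only on $a_1(x),\dots,a_{n+k-1}(x)$, so the set of $x$ realising this particular $n$ is a union of cylinders of length $n+k-1$.

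Density is the heart of the matter. Fix a basic cylinder $C=[c_1,\dots,c_N]$, parameters $k,j,m$, and set $\varepsilon:=1/m$. It suffices to extend the prefix to $c_1,\dots,c_{n+k-1}$ for some $n\geq m$ in such a way that $\lVert\Pi_k(x,n)-\mathbf{p}^{(k,j)}\rVert_1<\varepsilon$ holds for every $x\in[c_1,\dots,c_{n+k-1}]$. \emph{Step 1 (shift-invariant rational truncation).} Pick $A$ large enough that the tail mass of $\mathbf{p}^{(k,j)}$ outside $\{1,\dots,A\}^k$ is $<\varepsilon/6$, redistribute this tail on $\{1,\dots,A\}^k$ so as to restore the marginal identities $\sum_{i}p_{i\mathbf{i}}=\sum_{i}p_{\mathbf{i}i}$ for all $\mathbf{i}\in\{1,\dots,A\}^{k-1}$ (only $O(\varepsilon)$ correction per coordinate is needed since these relations are rational and the truncated defect vector satisfies the necessary total-mass balance), then round to rational coordinates with common denominator $L$ chosen large enough that the rounding error is $<\varepsilon/6$. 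This yields a shift-invariant rational vector $\mathbf{q}$ supported on $\{1,\dots,A\}^k$ with $\lVert\mathbf{q}-\mathbf{p}^{(k,j)}\rVert_1<\varepsilon/3$. \emph{Step 2 (Eulerian realisation).} Form the directed multigraph on vertex set $\{1,\dots,A\}^{k-1}$ in which each $\mathbf{b}=b_1\dots b_k$ contributes $Lq_{\mathbf{b}}$ copies of the edge $b_1\dots b_{k-1}\to b_2\dots b_k$. The shift-invariance of $\mathbf{q}$ is precisely the Kirchhoff balance at every vertex, so each connected component carrying positive weight admits an Eulerian circuit; splicing these with $O(A^{k-1})$ auxiliary edges produces a single closed walk whose $k$-block frequencies differ from $\mathbf{q}$ by $O(A^{k-1}/L)<\varepsilon/6$. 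Reading edge labels gives a word $w$; concatenating $c_1\dots c_N$ with sufficiently many copies of $w$ produces a prefix of length $n+k-1\geq m$ in which the contribution of the initial block $c_1\dots c_N$ to the frequency vector is $O(N/n)<\varepsilon/6$. The triangle inequality then gives the desired $<\varepsilon$ bound.

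The main obstacle is Step 2: arranging a single closed Eulerian walk when the de Bruijn multigraph supporting $\mathbf{q}$ is disconnected, while controlling the error introduced by the splicing edges. The truncation in Step 1 has to be engineered so as not to destroy shift invariance, which requires the short redistribution argument indicated above. Once density is established, $\mathbb{E}=\bigcap_{k,j,m}U_{k,j,m}$ is a dense $G_\delta$ in the Baire space $\mathbb{I}$, hence residual, which is exactly the content of the theorem.
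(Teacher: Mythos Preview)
Your argument is correct and follows the same Baire-category strategy as the paper (and Olsen's original): exhibit $\mathbb{E}$ as a countable intersection of open dense sets, with density obtained by appending to an arbitrary cylinder prefix sufficiently many copies of a finite word whose $k$-block frequencies approximate the target shift-invariant vector. Your explicit Eulerian-circuit construction in Step~2 is precisely the content of the paper's Lemma~\ref{lem:Znotempty} (quoted from Olsen), your concatenation estimate is the paper's Lemma~\ref{lem:frequency:word}, and the only organisational difference is that the paper routes the argument through an auxiliary ``property~$P$'' and the sets $E_{h,m,\mathbf{q},i}$ --- a layer introduced solely so that the proof propagates through iterated Ces\`aro averages (the $r\geq1$ case of Theorem~\ref{thm:extremely_non_normal}), which collapses to your direct $U_{k,j,m}$ decomposition when $r=0$. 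Both you and the paper take the density of finitely-supported shift-invariant vectors in $\mathrm{S}_k$ for granted; your Step~1 redistribution sketch is no more and no less than what the paper assumes when it asserts that $\mathrm{S}_k^*$ is dense in $\mathrm{S}_k$.
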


Our second example are continued fractions with maximal frequency
oscillation. We denote by $\mathrm{A}(x,\mathbf{b})$ the set of all
accumulation points of the sequence
$(\Pi(x,\mathbf{b},n))_{n\in\N}$. Furthermore we set
$$\mathrm{A}(\mathbf{b})=\bigcup_{x\in\mathbb{I}}\mathrm{A}(x,\mathbf{b}).$$
Then the set of continued fractions with maximal frequency oscillation
$\mathbb{F}$ is defined by
$$\mathbb{F}=\{x\in\mathbb{I}\colon
\mathrm{A}(\mathbf{b})=\mathrm{A}(x,\mathbf{b})\text{ for all }\mathbf{b}\}.$$

Liao, Ma and Wang \cite{liao_ma_wang2008:dimension_some_non} could
show the following
\begin{thm}[{\cite[Theorem 1.1]{liao_ma_wang2008:dimension_some_non}}]
\label{F_residual}
The set of continued fractions with maximal frequency oscillation is residual.
\end{thm}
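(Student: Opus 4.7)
The plan is to exhibit a dense $G_\delta$ subset of $\mathbb{I}$ that lies inside $\mathbb{F}$, via a standard Baire category scheme. For each block $\mathbf{b} \in \bigcup_{k \geq 1} \N^k$ I would fix a countable dense subset $D(\mathbf{b}) \subseteq \mathrm{A}(\mathbf{b})$; such a set exists because $\mathrm{A}(\mathbf{b})$ is a separable subset of $[0,1]$. For $\alpha \in D(\mathbf{b})$ and integers $m,M \geq 1$ set
$$U(\mathbf{b},\alpha,m,M) := \{x \in \mathbb{I} \colon |\Pi(x,\mathbf{b},n)-\alpha| < 1/m \text{ for some } n \geq M\}.$$
The intersection $\bigcap_{m,M} U(\mathbf{b},\alpha,m,M)$ is precisely the set of $x$ with $\alpha \in \mathrm{A}(x,\mathbf{b})$. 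Assuming each $U(\mathbf{b},\alpha,m,M)$ is open and dense in the Polish space $\mathbb{I}$,
$$R := \bigcap_{\mathbf{b}} \bigcap_{\alpha \in D(\mathbf{b})} \bigcap_{m,M} U(\mathbf{b},\alpha,m,M)$$
is a dense $G_\delta$. For $x \in R$ one has $D(\mathbf{b}) \subseteq \mathrm{A}(x,\mathbf{b})$, and since $\mathrm{A}(x,\mathbf{b})$ is closed as a set of accumulation points this forces $\mathrm{A}(\mathbf{b}) \subseteq \overline{D(\mathbf{b})} \subseteq \mathrm{A}(x,\mathbf{b})$; the reverse inclusion $\mathrm{A}(x,\mathbf{b}) \subseteq \mathrm{A}(\mathbf{b})$ is trivial by definition, so $R \subseteq \mathbb{F}$.

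Openness of $U(\mathbf{b},\alpha,m,M)$ is immediate because $\Pi(x,\mathbf{b},n)$ depends only on $a_1(x),\ldots,a_{n+k-1}(x)$, so the defining strict inequality persists on a cylinder neighbourhood. The substantive step is density. Fix a nonempty cylinder $[c_1,\ldots,c_N]$. Because $\alpha \in \mathrm{A}(\mathbf{b})$, there exist $y \in \mathbb{I}$ and arbitrarily large $n_0$ with $|\Pi(y,\mathbf{b},n_0)-\alpha|$ as small as desired. Define $x$ by $a_i(x) = c_i$ for $i \leq N$ and $a_{N+j}(x) = a_j(y)$ for $j \geq 1$, and set $n = N + n_0$. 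Comparing the two digit sequences, the number of occurrences of $\mathbf{b}$ among $a_1(x),\ldots,a_n(x)$ differs from that among $a_1(y),\ldots,a_{n_0}(y)$ by at most $N + k$, and so
$$|\Pi(x,\mathbf{b},n)-\Pi(y,\mathbf{b},n_0)| \leq \frac{N+k}{n} + \frac{N}{n}\,\Pi(y,\mathbf{b},n_0).$$
Taking $n_0$ large makes this error arbitrarily small while simultaneously yielding $n \geq M$ and $|\Pi(x,\mathbf{b},n) - \alpha| < 1/m$, so $x \in U(\mathbf{b},\alpha,m,M) \cap [c_1,\ldots,c_N]$, proving density.

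I expect the only genuine obstacle to be this density argument: the estimate is elementary but must account for both the prefix's contribution to the count of $\mathbf{b}$-occurrences and the change in denominator. Everything else is bookkeeping: the countable union $\bigcup_{k \geq 1} \N^k$ of blocks and the countability of each $D(\mathbf{b})$ keep the final intersection within the scope of Baire's theorem, and the closedness of $\mathrm{A}(x,\mathbf{b})$ promotes $D(\mathbf{b}) \subseteq \mathrm{A}(x,\mathbf{b})$ to the full equality $\mathrm{A}(\mathbf{b}) = \mathrm{A}(x,\mathbf{b})$ free of charge.
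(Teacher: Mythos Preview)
Your argument is correct and is a genuinely different, more self-contained route than the one the paper takes. The paper does not prove Theorem~\ref{F_residual} directly but deduces its generalisation (Theorem~\ref{thm:maximal_frequency_oscillation}) from the residuality of the extremely non-normal set $\mathbb{E}^{(r)}$ by establishing the inclusion $\mathbb{E}^{(r)}\subset\mathbb{F}^{(r)}$. That inclusion requires two ingredients you bypass entirely: first, the explicit identification $\mathrm{A}^{(r)}(\mathbf{b})=[0,1/p]$, where $p$ is the basic period of $\mathbf{b}$ (Lemmas~\ref{lem:interval} and~\ref{lem:basic_factor}); second, for each $\omega\in\mathbb{E}^{(r)}$, the construction of two specific shift-invariant probability vectors in $\mathrm{S}_k$ (one concentrated on $d^k$ with $d$ larger than every digit of $\mathbf{b}$, the other the periodic-orbit measure of $\mathbf{b}^\infty$) witnessing that $0$ and $1/p$ are accumulation points of $(\mathrm{P}^{(r)}(\omega,\mathbf{b},n))_n$. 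Your direct Baire scheme never needs to know what $\mathrm{A}(\mathbf{b})$ actually is, nor does it rely on the machinery of extremely non-normal numbers built in Sections~3--4; the density step via prefix-splicing is elementary and the estimate you give is the right one. The trade-off is that the paper's approach yields more information along the way --- the concrete description of $\mathrm{A}(\mathbf{b})$ and the structural fact $\mathbb{E}^{(r)}\subset\mathbb{F}^{(r)}$ --- and transfers immediately to the iterated Ces\`aro setting, whereas your argument, as written, handles only $r=0$.
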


These examples explain the dichotomy between normal and non-normal
numbers. Since Lebesgue almost all numbers are normal, they are
natural elements in the sense of probability. Whereas the set of
non-normal numbers is residual and therefore they are natural elements
in the sense of topology. This dichotomy has been exploit amongst
others for the $q$-adic number systems by Albeverio, Hyde, Laschos,
Olsen, Petrykiewicz, Pratsiovytyi, Shaw, Torbin and Volkmann in
\cite{albeverio_pratsiovytyi_torbin2005:topological_and_fractal,
  albeverio_pratsiovytyi_torbin2005:singular_probability_distributions,
  hyde10:_iterat_cesar_baire, olsen2004:extremely_non_normal,
  volkmann1958:uber_hausdorffsche_dimensionen6}, for the continued
fractions by Olsen \cite{olsen2003:extremely_non_normal} and for
iterated function systems by Beak and Olsen
\cite{baek_olsen2010:baire_category_and}.

In the present paper we want to investigate two main extensions of
these examples. First we want to generalize the continued fraction
expansions to infinite Markov partions. A similar step has already
been made for finite Markov partitions in another paper by the author
\cite{madritsch2014:non_normal_numbers}. Motivated by considerations
of Hyde \textit{et al.} \cite{hyde10:_iterat_cesar_baire} we want to
investigate the Baire category of Ces\`aro variants of the sets of
extremely non-normal numbers and numbers with maximal oscillation
frequency.


\section{Definitions and statement of results}
We want to start by taking a step back and consider general dynamical
systems. In our notation we mainly follow Chapter 6 of
\cite{lind_marcus1995:introduction_to_symbolic}. Let $M$ be a metric
space and $\phi:M\to M$ be a continuous map, then we call the pair
$(M,\phi)$ a dynamical system. As above we need some ``digits''. Let
$\mathcal{P}:=\{P_1,P_2,\ldots\}$ be an infinite family of disjoint
open sets. Then we call $\mathcal{P}$ a topological partition (of $M$)
if $M$ is the union of the closures $\overline{P_i}$ for $i\geq1$,
\textit{i.e.}
\[
M=\bigcup_{i\in\N}\overline{P_i}.
\]

For the rest of the paper let us assume that a dynamical system
$(M,\phi)$ together with an infinite topological partition $\mathcal{P}$ is
given. Now we want to take a closer look at the underlying symbolic
dynamical system. Without loss of generality we may denote by $\N$ the
alphabet corresponding to the partition $\mathcal{P}$. Furthermore let
$\N^k$ be the set of words of length $k$ and
\[
\N^*=\bigcup_{k\in\N}\N^k
\]
be the set of finite words. Finally we denote by $\N^\N$ the set of
infinite words over $\N$.

For an infinite word $\omega=\omega_1\omega_2\omega_3\ldots\in\N^\N$
and a positive integer $n$ we denote by $\omega\vert
n=\omega_1\omega_2\ldots\omega_n$ its truncation to the $n$th
place. Furthermore for a given finite word $\omega\in\N^*$ we denote
by $[\omega]\subset\N^\N$ the cylinder set consisting of all infinite
words starting with the same letters as $\omega$, \textit{i.e.}
\[
[\omega]:=\left\{\gamma\in\N^\N:\gamma\vert\lvert\omega\rvert=\omega\right\}.
\]

Now we want to describe the shift space that is generated. Therefore
we call an infinite word $\omega=a_1a_2a_3\ldots\in\N^\N$ allowed for
$(\mathcal{P},\phi)$ if
\[
  \bigcap_{k=1}^\infty\phi^{-k}\left(P_{a_k}\right)\neq\emptyset.
\]
Let $\mathcal{L}_{\mathcal{P},\phi}$ be the set of allowed words. Then
$\mathcal{L}_{\mathcal{P},\phi}$ is a language and there is a unique shift
space $X_{\mathcal{P},\phi}\subseteq\N^\N$, whose language is
$\mathcal{L}_{\mathcal{P},\phi}$.
We call $X_{\mathcal{P},\phi}\subseteq\N^\N $ the one-sided
symbolic dynamical system corresponding to
$(\mathcal{P},\phi)$. Finally for each $\omega=a_1a_2a_3\ldots
\in X_{\mathcal{P},\phi}$ and $n\geq0$ we denote by $D_n(\omega)$ the
cylinder set of order $n$ corresponding to $\omega$ in $M$,
\textit{i.e.},
\[
D_n(\omega):=\bigcap_{k=0}^n\phi^{-k}(P_{a_k})\subseteq M.
\]
Now we can state the definition of an infinite Markov partition.
\begin{defin}
  Let $(M,\phi)$ be a dynamical system and
  $\mathcal{P}=\{P_1,P_2,P_3,\ldots\}$ be an infinite topological partition of
  $M$. Then we call $\mathcal{P}$ an infinite Markov partition if the
  generated shift space $X_{\mathcal{P},\phi}$ is of finite type and
  for every $\omega\in X_{\mathcal{P},\phi}$ the intersection
  $\bigcap_{n=0}^\infty\overline{D_n(\omega)}$ consists of exactly one
  point.
\end{defin}

After introducing all the necessary ingredients we want to link the
introduced concept of infinite Markov partitions with the continued
fraction expansion and L\"uroth series (\textit{cf.} Dajani and
Kraaikamp \cite{dajani_kraaikamp2002:ergodic_theory_numbers}).

\begin{exa}\label{example:continued_fractions}
The dynamical system $([0,1],T)$, where $T$ is the Gauss map
\begin{gather*}
Tx=\begin{cases}\frac1x-\lfloor\frac1x\rfloor&\text{for }x\neq0,\\
0&\text{for }x=0,\end{cases}
\end{gather*}
together with the infinite topological partition
\[\mathcal{P}:=
\left\{\left(\tfrac12,1\right), \left(\tfrac13,\tfrac12\right),
  \left(\tfrac14,\tfrac13\right), \ldots\right\}\]
provides the continued fraction expansion.

In particular, if we set $x_1:=x$ and $x_{k+1}:=Tx_k$ for $k\geq1$ then
$a_k(x)=\lfloor\frac1{x_k}\rfloor$ for $k\geq1$ is the continued
fraction expansion of $x$ from the introduction.
\end{exa}

There are several extensions of the continued fraction expansion like
continued fractions from below, nearest integers continued fractions,
$\alpha$-continued fractions, Rosen continued fractions and
combination of those. For criteria such that a number is normal with
respect to different continued fractions expansions we refer the
reader to a paper by Kraaikamp and Nakada
\cite{kraaikamp_nakada2000:normal_numbers_continued}.

\begin{exa}\label{example:lueroth}
Let $\phi:[0,1)\to[0,1)$ be defined by
$$\phi(x)=\begin{cases}
n(n+1)x-n, &x\in\left[\frac1{n+1},\frac1n\right),\\
0 &x=0.
\end{cases}.$$ Then the pair $([0,1),\phi)$ together with the infinite
topological partition
\[\mathcal{P}:=
\left\{\left(\tfrac12,1\right), \left(\tfrac13,\tfrac12\right),
  \left(\tfrac14,\tfrac13\right), \ldots\right\}\] provides the
L\"uroth series \cite{lueroth1883:ueber_eine_eindeutige}.
\end{exa}

Under some mild restrictions one can replace the intervals by
arbitrary ones in order to get the generalized L\"uroth series
(\textit{cf.} Chapter 2.3 of
\cite{dajani_kraaikamp2002:ergodic_theory_numbers}).

Before pulling over the definition of normal and non-normal numbers,
we note that in contrast to the survey of Barat \textit{et
  al.}~\cite{barat_berthe_liardet+2006:dynamical_directions_in} we did
not use fibered systems for the definition of the dynamical
system. The reason lies in the concrete treatment of the border in the
case of Markov partitions. In partiuclar, when considering these
partitions it is clear that the sets $\mathcal{P}_i$ are all open
sets, whereas this is a priory not clear in the definition of fibered
systems by Schweiger \cite{schweiger1995:ergodic_theory_fibred}. This
plays a key role in the following analysis of the one-to-one
correspondence between the infinite word and the corresponding element
of $M$. By the definition of a Markov partition we have that every
$\omega\in X_{\mathcal{P},\phi}$ maps to a unique element $x\in
M$. However, the converse need not be true. Let us consider the
continued fractions expansion (Exemple
\ref{example:continued_fractions}). Then the rational $\frac14$ has
two expansions, namely $[4]$ and $[3,1]$. One observes that this
ambiguity originates from the intersections
$\overline{P_i}\cap\overline{P_j}$ for $i\neq j$ (which means from the
borders of $\overline{P_i}$). Thus we concentrate on the inner points,
which provide us with an infinite and unique expansion. Let
\[
U=\bigcup_{i=1}^\infty P_i,
\]
which is an open and dense ($\overline{U}=M$) set. Then for each
$n\geq1$ the set
\[
U_n=\bigcap_{k=0}^{n-1}\phi^{-k}(U),
\]
is open and dense in $M$. Thus by the Baire Category Theorem, the set
\begin{gather}\label{mani:U-infty}
U_\infty=\bigcap_{n=1}^\infty U_n
\end{gather}
is dense. Since $M\setminus U_\infty$ is the countable union of
nowhere dense sets it suffices to show that a set is residual in
$U_\infty$ in order to show that in fact it is residual in
$M$.

Since the definition of normal and thus non-normal numbers involves
the expansions of the elements in $M$ we need the map
$\pi_{\mathcal{P},\phi}:X_{\mathcal{P},\phi}\to M$ defined by
\[
\{\pi_{\mathcal{P},\phi}(\omega)\}=\bigcap_{n=1}^\infty\overline{D_n(\omega)}.
\]
Since $\pi_{\mathcal{P},\phi}$ is bijective on $U_\infty$ we may call
$\omega$ \textbf{the} symbolic expansion of $x$ if
$\pi_{\mathcal{P},\phi}(\omega)=x$. Thus in the following we will
silently suppose that $x\in U_\infty$.

After defining the environment we want to pull over the definitions of
normal and non-normal numbers to the symbolic dynamical system. To this end let
$\mathbf{b}\in\N^k$ be a block of letters of length $k$ and
$\omega=a_1a_2a_3\ldots\in X_{\mathcal{P},\phi}$ be the symbolic
representation of an element. Then we write
\[
\mathrm{P}(\omega,\mathbf{b},n)=\lvert\left\{0\leq i<n:a_{i+1}=b_1,\ldots,a_{i+k}=b_k\right\}\rvert
\]
for the frequency of the block $\mathbf{b}$ among the first $n$ letters of
$\omega$. In the same manner as above let 
\[
\mathrm{P}_k(\omega,n)=\left(\mathrm{P}(\omega,\mathbf{b}),n\right)_{\mathbf{b}\in\N^k}
\]
be the vector of all frequencies of blocks $\mathbf{b}$ of length $k$ among the
first $n$ letters of $\omega$.

Let $\mu$ be a given $\phi$-invariant probability measure on $X$ and
$\omega\in X$. Then we call the measure $\mu$ associated to $\omega$
if there exists a infinite sub-sequence $F$ of $\N$ such that for any
block $\mathbf{b}\in\Sigma^k$
$$\lim_{\substack{n\to\infty\\ n\in
    F}}\mathrm{P}(\omega,\mathbf{b},n)=\mu([\mathbf{b}]).$$ Furthermore, we
call $\omega$ a generic point for $\mu$ if we can take $F=\N$: then
$\mu$ is the only measure associated with $\omega$. If $\mu$ is the
maximal measure, then we call $\omega$ normal.

An application of Birkhoff's ergodic theorem yields for $\mu$ being
ergodic that almost all numbers $\omega\in X_{\mathcal{P},\phi}$ are
normal. In both Examples \ref{example:continued_fractions} and
\ref{example:lueroth} we have that the system is intrinsically
ergodic, which means that there exists a unique maximal ergodic
measure $\mu$ (\textit{cf.}  Chapter 3.1.2 of
\cite{dajani_kraaikamp2002:ergodic_theory_numbers}). 

Now we turn our attention to non-normal numbers. As in the paper of
Hyde \textit{et al.}  \cite{hyde10:_iterat_cesar_baire} we extend our
considerations to Ces\`aro averages of the frequencies. The idea
behind is that if the vector of frequencies has no accumulation point,
his Ces\`aro average might have one. To this end for a fixed block
$\mathbf{b}=b_1\ldots b_k\in\N^k$ let
\[\mathrm{P}^{(0)}(\omega,\mathbf{b},n)=\mathrm{P}(\omega,\mathbf{b},n).\]
For $r\geq1$ we recursively define
\[
\mathrm{P}^{(r)}(\omega,\mathbf{b},n)=\frac{\sum_{j=1}^n\mathrm{P}^{(r-1)}(\omega,\mathbf{b},j)}{n}
\]
to be the $r$th iterated Ces\`aro average of the frequency of the
block of digits $\mathbf{b}$ under the first $n$ digits. Furthermore
we define by
\[
\mathrm{P}_k^{(r)}(\omega,n):=\left(\mathrm{P}^{(r)}(\omega,\mathbf{b},n)\right)_{\mathbf{b}\in\N^k}
\]
the vector of $r$th iterated Ces\`aro averages. As above we are
interested in the accumulation points. Thus similar to above let
$\mathrm{A}^{(r)}_k(\omega)$ denote the set of accumulation points of the sequence
$(\mathrm{P}^{(r)}_k(\omega,n))_n$ with respect to
$\lVert\cdot\rVert_1$,\textit{i.e.}
\[
\mathrm{A}^{(r)}_k(\omega):=\left\{\mathbf{p}\in\Delta_k:\mathbf{p}\text{ is an
    accumulation point of }(\mathrm{P}^{(r)}_k(\omega,n))_n\right\}.
\]

We will denote the set of extremely non-k-normal numbers of $M$ by
$\mathbb{E}^{(0)}_k$. 
Similarly for $r\geq1$ and $k\geq1$ we denote by $\mathbb{E}^{(r)}_k$
the set of $r$th iterated Ces\`aro extremely non-k-normal numbers of
$M$. 
Furthermore for $r\geq1$ we denote by $\mathbb{E}^{(r)}$ the set of
$r$th iterated Ces\`aro extremely non-normal numbers and by
$\mathbb{E}$ the set of completely Ces\`aro extremely non-normal
numbers, \textit{i.e.}
\[
\mathbb{E}^{(r)}=\bigcap_{k}\mathbb{E}_k^{(r)}
\quad\text{and}\quad
\mathbb{E}=\bigcap_{r}\mathbb{E}^{(r)}=\bigcap_{r,k}\mathbb{E}_k^{(r)}.
\]

Then our result is the following
\begin{thm}\label{thm:extremely_non_normal}
  Let $k\geq1$ and $r\geq0$ be integers. Furthermore let
  $\mathcal{P}=\{P_1,P_2,\ldots\}$ be an infinite Markov partition for
  $(M,\phi)$. Suppose that the generated shift space
  $X_{\mathcal{P},\phi}$ is the one-sided full-shift. Then the set
  $\mathbb{E}^{(r)}_k$ is residual.
\end{thm}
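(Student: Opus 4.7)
The plan is to exhibit $\mathbb{E}^{(r)}_k$ as a dense $G_\delta$ subset of the symbolic space $X_{\mathcal{P},\phi}=\N^\N$ (the full shift, by hypothesis), and then transfer residuality to $M$ using the bijectivity of $\pi_{\mathcal{P},\phi}$ on $U_\infty$ together with \eqref{mani:U-infty}. Since $\mathrm{S}_k$ is separable under $\lVert\cdot\rVert_1$, I would first fix a countable dense subset $\{\mathbf{p}^{(m)}\}_{m\in\N}\subseteq\mathrm{S}_k$ consisting of rational, finitely supported, shift-invariant probability vectors. Density of such vectors requires a short preparatory lemma: given any $\mathbf{p}\in\mathrm{S}_k$, truncate to its largest coordinates, rationally perturb, and then redistribute a tiny mass on one extra coordinate to repair the invariance equalities $\sum_i p_{i\mathbf{i}}=\sum_i p_{\mathbf{i}i}$. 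Writing
\[
U(m,\ell,N):=\bigl\{\omega\in X_{\mathcal{P},\phi} : \exists\, n\geq N,\ \lVert\mathrm{P}^{(r)}_k(\omega,n)-\mathbf{p}^{(m)}\rVert_1<\tfrac{1}{\ell}\bigr\},
\]
one immediately has $\mathbb{E}^{(r)}_k=\bigcap_{m,\ell,N}U(m,\ell,N)$, and each $U(m,\ell,N)$ is open because $\mathrm{P}^{(r)}_k(\omega,n)$ depends only on $\omega_1,\ldots,\omega_{n+k-1}$.

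The substantive work is density of each $U(m,\ell,N)$. Given a cylinder $[\gamma]$, I would extend $\gamma$ to an allowed $\omega$ (no restriction, since the shift is full) landing inside $U(m,\ell,N)$. The base case $r=0$ is the Olsen-style construction: the finite-support shift-invariant vector $\mathbf{p}^{(m)}$ is a conservative rational flow on the de Bruijn graph of order $k-1$, so an Euler-type argument yields a closed walk—equivalently, a finite word $w$—whose $k$-block frequency vector matches $\mathbf{p}^{(m)}$ to arbitrary precision. Appending sufficiently many copies of $w$ swamps the prefix $\gamma$ and forces $\lVert\mathrm{P}^{(0)}_k(\omega,n)-\mathbf{p}^{(m)}\rVert_1<1/\ell$ for some $n\geq N$.

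For $r\geq 1$ I would induct. Suppose the previous Ces\`aro level already admits, within any cylinder, an extension making $\lVert\mathrm{P}^{(r-1)}_k(\omega,n_0)-\mathbf{p}^{(m)}\rVert_1<\varepsilon'$ at some $n_0\geq N'$. Continue appending the word $w$: once $\mathrm{P}^{(r-1)}_k$ is close to $\mathbf{p}^{(m)}$, the Ces\`aro stability of iterated frequencies under appending the same periodic block keeps $\mathrm{P}^{(r-1)}_k(\omega,j)$ within $\tfrac{1}{2\ell}$ of $\mathbf{p}^{(m)}$ for all $j$ in a window $[n_0,Mn_0]$, with $M$ chosen large. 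The splitting
\[
\mathrm{P}^{(r)}_k(\omega,Mn_0)=\frac{1}{Mn_0}\sum_{j=1}^{n_0}\mathrm{P}^{(r-1)}_k(\omega,j)+\frac{1}{Mn_0}\sum_{j=n_0+1}^{Mn_0}\mathrm{P}^{(r-1)}_k(\omega,j)
\]
then yields $\lVert\mathrm{P}^{(r)}_k(\omega,Mn_0)-\mathbf{p}^{(m)}\rVert_1<1/\ell$ once $1/M$ is small enough to absorb the bounded first term (each summand has $\ell^1$ norm one) and $\varepsilon'$ is small enough to handle the second.

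The main obstacle I anticipate is coordinating two estimates in the infinite-alphabet setting: the $\lVert\cdot\rVert_1$ tail $\sum_{\mathbf{b}\notin\mathrm{supp}\,\mathbf{p}^{(m)}}\mathrm{P}^{(r)}(\omega,\mathbf{b},n)$, which requires the constructed word $w$ to avoid letters outside a chosen finite alphabet, and the uniform closeness of $\mathrm{P}^{(r-1)}_k$ to the target along the entire long window $[n_0,Mn_0]$, not merely at its endpoints. Both must be quantified explicitly in terms of $m,\ell,N$ so the iterated construction closes up at each $r$. Once these estimates are in hand, $\mathbb{E}^{(r)}_k$ is dense $G_\delta$ in $X_{\mathcal{P},\phi}$, and residuality in $M$ follows since $M\setminus U_\infty$ is meager and $\pi_{\mathcal{P},\phi}$ is a homeomorphism of $\pi_{\mathcal{P},\phi}^{-1}(U_\infty)$ onto $U_\infty$.
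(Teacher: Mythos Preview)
Your outline is correct but organized differently from the paper. You decompose $\mathbb{E}^{(r)}_k$ itself as a $G_\delta$ and prove density of each $U(m,\ell,N)$ by induction on $r$. The paper instead isolates a single residual set $E\subseteq U_\infty$, independent of $r$, defined by a ``property~$P$'' at level $r=0$: the frequency vectors are required to stay close to each rational target $\mathbf{q}$ not at a single index but over windows $[j,\varphi_m(2^j)]$, where $\varphi_m$ is an $m$-fold iterated exponential. A short propagation lemma then shows that if $(\mathrm{P}^{(r)}_k(\omega,n))_n$ has property~$P$, so does $(\mathrm{P}^{(r+1)}_k(\omega,n))_n$; one application of Ces\`aro averaging consumes one exponential from the tower, which is why the $m$-fold iteration is built in from the start. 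Thus $E\subseteq\mathbb{E}^{(r)}_k$ for every $r$ at once, and residuality of $\bigcap_{r}\mathbb{E}^{(r)}_k$ comes for free without rerunning the open--dense argument at each level.

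The obstacle you flag---uniform closeness of $\mathrm{P}^{(r-1)}_k$ along the whole window $[n_0,Mn_0]$ rather than just at $n_0$---is precisely what property~$P$ is designed to encode, and it is the one point where your stated induction hypothesis (``at some $n_0\ge N'$'') is too weak as written. In your framework the clean fix is to strengthen the inductive claim to: for any cylinder $[\gamma]$ and any $\varepsilon>0$, appending copies of the Olsen word $w$ yields $\lVert\mathrm{P}^{(s)}_k(\gamma w^*,j)-\mathbf{p}^{(m)}\rVert_1<\varepsilon$ for \emph{all} $j\ge L_s$, for each $s\le r$. This follows from the $s=0$ case (the paper's Lemma~\ref{lem:frequency:word}) by exactly the Ces\`aro splitting you wrote down, iterated $r$ times; the thresholds $L_s$ grow roughly like $L_0/\varepsilon^s$. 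With that strengthening your argument closes, and your route has the advantage of being more direct, at the cost of not automatically handling all $r$ with a single construction.
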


\begin{rem}
  We note that the same holds true for $X_{\mathcal{P},\phi}$ being a
  one-sided shift of finite type. In fact the only change is a
  replacement of the definition of $Z_n$ and of Lemma
  \ref{lem:Znotempty} (\textit{cf.} Olsen
  \cite{olsen2003:multifractal_analysis_divergence} and Olsen and
  Winter
  \cite{olsen_winter2007:multifractal_analysis_divergence}).
\end{rem}



After considering extremely non-normal numbers we want to turn our
attention towards numbers with maximal oscillation frequency.
Similarly to above, for $r\geq0$, we denote by
$\mathrm{A}^{(r)}(\omega,\mathbf{b})$ the set of all accumulation
points of the sequence
$\left(\mathrm{P}^{(r)}(\omega,\mathbf{b},n)\right)_{n\in\N}$. Furthermore
we set
$$\mathrm{A}^{(r)}(\mathbf{b})=\bigcup_{\omega\in
  U_\infty}\mathrm{A}^{(r)}(\omega,\mathbf{b}).$$ Then the set of
numbers with $r$-th iterated Ces\`aro maximal frequency oscillation
$\mathbb{F}^{(r)}$ is defined by
$$\mathbb{F}^{(r)}=\left\{\omega\in
  U_\infty\colon\mathrm{A}^{(r)}(\mathbf{b})=\mathrm{A}^{(r)}(\omega,\mathbf{b})\text{
    for all }\mathbf{b}\in\N^*\right\}.$$

Our result is a generalization of Theorem 1 of Liao, Ma and Wang
\cite{liao_ma_wang2008:dimension_some_non}.
\begin{thm}\label{thm:maximal_frequency_oscillation}
Let $r\geq0$ be an integer and let $\mathcal{P}=\{P_1,P_2,\ldots\}$ be
an infinite Markov partition for $(M,\phi)$. Suppose that the
generated shift space $X_{\mathcal{P},\phi}$ is the one-sided
full-shift. Then $\mathbb{F}^{(r)}$ is residual.
\end{thm}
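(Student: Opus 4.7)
My plan is to exhibit $\mathbb{F}^{(r)}$ as a countable intersection of open dense subsets of $U_\infty$ and then invoke the Baire category theorem; since $M\setminus U_\infty$ is meager by \eqref{mani:U-infty}, residuality in $U_\infty$ transfers to $M$. Throughout I identify $x\in U_\infty$ with its unique symbolic expansion $\omega\in X_{\mathcal{P},\phi}$ via $\pi_{\mathcal{P},\phi}$.

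\emph{Countable reduction.} For each $\mathbf{b}\in\N^*$ I fix a countable dense subset $D_\mathbf{b}=\{v_{\mathbf{b},m}\}_{m\geq 1}$ of $\mathrm{A}^{(r)}(\mathbf{b})\subseteq[0,1]$. Since $\mathrm{A}^{(r)}(\omega,\mathbf{b})$ is closed in $[0,1]$ and, by definition of $\mathrm{A}^{(r)}(\mathbf{b})$ as a union, is contained in $\mathrm{A}^{(r)}(\mathbf{b})$, the equality $\mathrm{A}^{(r)}(\omega,\mathbf{b})=\mathrm{A}^{(r)}(\mathbf{b})$ is equivalent to $D_\mathbf{b}\subseteq\mathrm{A}^{(r)}(\omega,\mathbf{b})$. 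Enumerating $\N^*=\{\mathbf{b}_j\}_j$ and setting
\[
G(\mathbf{b},v,\varepsilon,N):=\bigl\{\omega\in U_\infty:\exists\, n\geq N\text{ with }|\mathrm{P}^{(r)}(\omega,\mathbf{b},n)-v|<\varepsilon\bigr\},
\]
one has $\mathbb{F}^{(r)}=\bigcap_{j,m,\ell,N\in\N}G(\mathbf{b}_j,v_{\mathbf{b}_j,m},1/\ell,N)$. Each $G(\mathbf{b},v,\varepsilon,N)$ is open in $U_\infty$: by an easy induction on $r$, $\omega\mapsto\mathrm{P}^{(r)}(\omega,\mathbf{b},n)$ depends only on the first $n+|\mathbf{b}|-1$ letters of $\omega$, hence is locally constant on the cylinders $D_n(\omega)$, which are open in $M$.

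\emph{Density --- the main step.} Here the full-shift hypothesis is crucial. Fix $\mathbf{b},v,\varepsilon,N$ and a nonempty cylinder set corresponding to a prefix $w$ of length $L$. Choose $\omega'\in U_\infty$ with $\mathrm{P}^{(r)}(\omega',\mathbf{b},n_j)\to v$ along some subsequence $n_j\to\infty$, which exists since $v\in\mathrm{A}^{(r)}(\mathbf{b})$. Because $X_{\mathcal{P},\phi}$ is the full-shift, the concatenation $w\omega'$ is admissible; using that $\pi^{-1}(U_\infty)$ is a dense $G_\delta$ in $X_{\mathcal{P},\phi}$, a standard Markov-partition argument produces $\omega\in U_\infty$ whose first $L$ letters equal $w$ and whose tail agrees with $\omega'$. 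The conclusion then rests on the Ces\`aro stability estimate
\[
\lim_{n\to\infty}\bigl|\mathrm{P}^{(r)}(\omega,\mathbf{b},n)-\mathrm{P}^{(r)}(\omega',\mathbf{b},n)\bigr|=0,
\]
proven by induction on $r$: for $r=0$ only $O(L)$ of the indices $0\leq i<n$ can disagree, giving an error of order $L/n$; for the inductive step one applies the classical Ces\`aro lemma (if $f_n\to 0$ then $\tfrac1n\sum_{j\leq n}f_j\to 0$) to the $(r-1)$-th error. Choosing $j$ large enough so that $n_j\geq N$, $|\mathrm{P}^{(r)}(\omega',\mathbf{b},n_j)-v|<\varepsilon/2$, and the stability error is below $\varepsilon/2$, places $\omega$ in $G(\mathbf{b},v,\varepsilon,N)$ within the prescribed cylinder.

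Combining openness and density, $\mathbb{F}^{(r)}$ is a dense $G_\delta$ in $U_\infty$ and therefore residual in $M$. The main obstacle is the inductive Ces\`aro stability estimate of the density step; once it is in place, everything else is a fairly standard Baire-category argument that runs in parallel with the treatment of extremely non-normal numbers in Theorem~\ref{thm:extremely_non_normal}.
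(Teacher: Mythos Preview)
Your argument is correct, but it takes a genuinely different route from the paper's. The paper does not write $\mathbb{F}^{(r)}$ directly as a countable intersection of open dense sets; instead it proves the inclusion $\mathbb{E}^{(r)}\subset\mathbb{F}^{(r)}$ and then invokes Theorem~\ref{thm:extremely_non_normal}. Concretely, it first shows (Lemma~\ref{lem:interval}) that each $\mathrm{A}^{(r)}(\omega,\mathbf{b})$ is an interval, then (Lemma~\ref{lem:basic_factor}) computes $\mathrm{A}^{(r)}(\mathbf{b})=[0,1/p]$ with $p=\mathrm{per}(\mathbf{b})$, and finally checks that for $\omega\in\mathbb{E}^{(r)}$ the two endpoints $0$ and $1/p$ are accumulation points of $(\mathrm{P}^{(r)}(\omega,\mathbf{b},n))_n$ by choosing suitable vectors $\mathbf{q}\in\mathrm{S}_k$ (the Dirac at $d^k$ for $0$, the periodic-orbit measure for $1/p$). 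Your approach instead sidesteps $\mathbb{E}^{(r)}$ entirely: you never need the explicit form of $\mathrm{A}^{(r)}(\mathbf{b})$, only that each of its points is witnessed by \emph{some} $\omega'$, and your concatenation-plus-Ces\`aro-stability argument replaces the use of shift-invariant probability vectors. The paper's method is shorter once Theorem~\ref{thm:extremely_non_normal} is in hand and yields the extra information $\mathrm{A}^{(r)}(\mathbf{b})=[0,1/p]$ and $\mathbb{E}^{(r)}\subset\mathbb{F}^{(r)}$; your method is self-contained and would apply verbatim to more general target sets than $\mathrm{A}^{(r)}(\mathbf{b})$.

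One small point worth tightening in your density step: you do not actually need the full tail of $\omega$ to agree with $\omega'$, and insisting on this makes the ``land in $U_\infty$'' issue awkward. Since membership in $G(\mathbf{b},v,\varepsilon,N)$ at the chosen index $n_j$ depends only on the first $n_j+|\mathbf{b}|-1$ letters, it is cleaner to first fix the finite prefix $w(\omega'|_{n_j+|\mathbf{b}|})$ and then pick any extension whose image under $\pi_{\mathcal{P},\phi}$ lies in $U_\infty$; such an extension exists because $\pi^{-1}(U_\infty)$ is dense in the (open) cylinder.
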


In the subsequent sections we will proof the two Theorems
\ref{thm:extremely_non_normal} and
\ref{thm:maximal_frequency_oscillation}. We will start with
a general section on properties of words which we need in the
proof of Theorem \ref{thm:extremely_non_normal} and which are
interesting on their own. Then we will show Theorem
\ref{thm:extremely_non_normal}. Finally in
Section~\ref{sec:proof-frequency-oscillation} we will prove Theorem
\ref{thm:maximal_frequency_oscillation} by showing that $\mathbb{E}^{(r)}\subset\mathbb{F}^{(r)}$.

\section{Preliminaries on words}
First of all we want to reduce the infinite problem to a finite
one. Thus instead of considering $\mathrm{S}_k$ as such we concentrate
on those probability vectors that only put weight on a finite set of
digits. Thus let
\begin{gather}\label{SkN}
\mathrm{S}_{k,N}=\left\{(p_{\mathbf{i}})_{\mathbf{i}\in\N^k}:
\begin{gathered}
p_{\mathbf{i}}\geq0,\sum_{\mathbf{i}\in\N^k}p_{\mathbf{i}}=1,
\sum_{i\in\N}p_{i\mathbf{i}}=\sum_{i\in\N}p_{\mathbf{i}i}\text{ for
  all }\mathbf{i}\in\N^{k-1}\\
p_{\mathbf{i}}=0\text{ for }\mathbf{i}\in\N^k\setminus\{1,\ldots,N\}^k
\end{gathered}\right\}
\end{gather}
be the set of shift invariant probability vectors, where only the
first $N$ digits are weighted. Furthermore let
\begin{gather}\label{Skstar}
\mathrm{S}_k^*=\bigcup_{N\geq1}\mathrm{S}_{k,N}
\end{gather}
be the union of all probability vectors over a finite alphabet.

Since $\mathrm{S}_k^*$ is a dense and separable subset of
$\mathrm{S}_k$, we may concentrate on a dense sequence
$(\mathbf{q}_{k,m})_m$ in $\mathrm{S}_k^*$. We fix
$\mathbf{q}=\mathbf{q}_{k,m}$ throughout the rest of this
section. Then $\mathbf{q}\in\mathrm{S}_{k,N}$ for some $N\geq1$, such
that $\mathbf{q}_{\mathbf{i}}=0$ for
$\mathbf{i}\in\N^k\setminus\{1,\ldots,N\}^k$. For $n\geq1$ we put
\[
Z_n=Z_n(\mathbf{q},N,k)=\left\{\omega\in\bigcup_{\ell\geq knN^k}\{1,\ldots,N\}^\ell\vert
\lVert\mathrm{P}_k(\omega)-\mathbf{q}\rVert_1\leq\frac1n\right\}.
\]
Since $\mathbf{q}$, $N$ and $k$ will be fixed we may omit them throughout the
rest of this section.

The main idea consists now in the construction of a word having the
desired frequencies. In particular, for a given word $\omega$ we want
to show that we can add sufficiently many copies of any word from
$Z_n$ to get a word with the desired properties. To this end we first
need, that there is at least one word in $Z_n$, \textit{i.e.} that
$Z_n$ is not empty.

\begin{lem}[{\cite[Lemma 2.4]{olsen2003:extremely_non_normal}}]\label{lem:Znotempty}
  For all $n\geq1$, $\mathbf{q}\in\mathrm{S}_k^*$, $N\in\N$ and
  $k\in\N$ we have $Z_n(\mathbf{q},N,k)\not =\emptyset$.
\end{lem}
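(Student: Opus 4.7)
The plan is to construct an explicit word in $Z_n$ via an Eulerian walk in a de Bruijn--style multigraph attached to a rational approximation of $\mathbf{q}$.

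First, I approximate $\mathbf{q}$. Since the polytope $\mathrm{S}_{k,N}$ is cut out by linear equations with integer coefficients, its rational points are dense. So I choose a shift-invariant rational probability vector $\mathbf{q}' \in \mathrm{S}_{k,N}$ with common denominator $L \geq 2knN^k$ and $\lVert \mathbf{q}-\mathbf{q}'\rVert_1 < \frac{1}{2n}$.

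Next, form the directed multigraph $G$ with vertex set $\{1,\ldots,N\}^{k-1}$ in which, for each block $\mathbf{i} = i_1 \cdots i_k$, there are $Lq'_\mathbf{i}$ parallel edges from $i_1 \cdots i_{k-1}$ to $i_2 \cdots i_k$. A direct computation shows that the shift-invariance relation defining $\mathrm{S}_{k,N}$ translates exactly to the statement that in-degree equals out-degree at every vertex of $G$. On each strongly connected component of the support of $G$, Euler's theorem then produces a closed walk using every edge exactly once; reading off consecutive vertices of this walk yields a cyclic word whose length-$k$ block counts match $Lq'_\mathbf{i}$ for the edges in that component. Stitching these component walks together through short connecting paths in the vertex set produces a single word $\omega$ of length $L + O(kN^{k-1})$ whose block frequencies deviate from $\mathbf{q}'$ by at most $O(N^{k-1}/L)$, which is less than $\frac{1}{2n}$ by our choice of $L$. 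The triangle inequality then gives $\lVert \mathrm{P}_k(\omega) - \mathbf{q} \rVert_1 < \frac{1}{n}$ and $\lvert\omega\rvert \geq L \geq knN^k$, confirming $\omega \in Z_n$.

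The main obstacle is ensuring that every edge of positive weight lies in a strongly connected component of $G$; otherwise no Eulerian walk exists and the construction fails. This follows from a flow-balance argument exploiting shift-invariance: an edge departing a transient class with no return path would eventually force an unbalanced vertex downstream, contradicting the in-degree equals out-degree condition at that vertex. Once this structural fact is in hand, the stitching of component walks and the accompanying counting are routine, and the approximation bounds fall out of the triangle inequality as above.
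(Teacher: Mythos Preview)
The paper does not give its own proof of this lemma; it simply cites Olsen's Lemma~2.4 in \cite{olsen2003:extremely_non_normal}. Your Eulerian-walk construction on the de Bruijn multigraph built from a rational approximant is exactly the classical method for producing words with prescribed block frequencies, and it is essentially the idea underlying Olsen's argument (Olsen phrases it less graph-theoretically, but the combinatorics are the same). So your approach matches the cited proof in spirit and is correct.

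Two small remarks. First, your ``main obstacle'' paragraph is slightly over-stated: in a balanced directed multigraph the edge set decomposes into edge-disjoint directed cycles, so each weakly connected component is automatically strongly connected and Eulerian---no separate flow argument is needed. Second, the stitching cost between components is bounded by a constant depending only on $N$ and $k$ (paths in the full de Bruijn graph on $\{1,\ldots,N\}^{k-1}$), which is all you need; your stated bound $O(kN^{k-1})$ is not quite the right shape but any bound independent of $L$ suffices once $L$ is taken large. Finally, note that the lemma is only meaningful when $N$ is large enough that $\mathbf{q}\in\mathrm{S}_{k,N}$, which is how the paper uses it and how you implicitly read it.
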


Now we may construct our word by adding arbitrary many copies of an
element of $Z_n$.

\begin{lem}\label{lem:frequency:word}
  Let $N,n,t$ be positive integers and
  $\mathbf{q}\in\mathrm{S}_{k,N}$. Furthermore let
  $\omega=\omega_1\ldots\omega_t\in\N^t$ be a word of length $t$ and
  let $M=\max_{1\leq i\leq t}\omega_i$ be the maximal ``digit'' in
  $\omega$. Then, for any $\gamma\in Z_n(\mathbf{q},N,k)$ and any
\begin{gather}\label{m:bound}
\ell\geq L:=t+\lvert\gamma\rvert\max\left(n,\frac tk\max\left(1,\frac{M^k}{N^k}\right)\right)
\end{gather}
we get that
\[
\lVert\mathrm{P}_k(\omega\gamma^*,\ell)-\mathbf{q}\rVert_1\leq\frac6n.
\]
\end{lem}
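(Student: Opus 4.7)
The plan is to decompose the length-$\ell$ prefix of $\omega\gamma^{*}$ as $\omega\,\gamma^{s}\,\gamma_{*}$, where $T:=\lvert\gamma\rvert$, $s=\lfloor(\ell-t)/T\rfloor$ and $\gamma_{*}$ is the length-$u<T$ prefix of $\gamma$, and then to show that for every block $\mathbf{b}$ the block-count in this prefix is dominated by the contribution of the $s$ complete copies of $\gamma$, with everything else absorbed into a uniform $O(1/n)$ error. The hypothesis $\ell\geq L$ translates directly into $s\geq\max\bigl(n,(t/k)\max(1,M^{k}/N^{k})\bigr)$, and this is the only place where the somewhat elaborate lower bound for $L$ will be used.

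First I would partition the starting positions $i\in\{0,\ldots,\ell-k\}$ into (a) those whose whole block sits inside a single copy of $\gamma$, whose joint contribution to the count $C_{\mathbf{b}}$ equals $s\cdot T\cdot\mathrm{P}_{k}(\gamma)_{\mathbf{b}}$, and (b) a ``remainder'' $R_{\mathbf{b}}$ consisting of positions that start in $\omega$, span an $\omega|\gamma$, $\gamma|\gamma$ or $\gamma|\gamma_{*}$ junction, or lie inside $\gamma_{*}$. A crude count shows $\sum_{\mathbf{b}}R_{\mathbf{b}}\leq t+sk+T$, and since every digit of $\gamma$ is at most $N$, any block containing a letter $>N$ must overlap $\omega$, so $\sum_{\mathbf{b}\notin\{1,\ldots,N\}^{k}}R_{\mathbf{b}}\leq t$. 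Setting $\rho:=sT/\ell\in[0,1]$, for each $\mathbf{b}\in\{1,\ldots,N\}^{k}$ I would use the identity
\[
\mathrm{P}_{k}(\omega\gamma^{*},\ell)_{\mathbf{b}}-\mathbf{q}_{\mathbf{b}}
=\rho\bigl(\mathrm{P}_{k}(\gamma)_{\mathbf{b}}-\mathbf{q}_{\mathbf{b}}\bigr)+(\rho-1)\mathbf{q}_{\mathbf{b}}+\frac{R_{\mathbf{b}}}{\ell},
\]
and combine it with the direct estimate $\sum_{\mathbf{b}\notin\{1,\ldots,N\}^{k}}\mathrm{P}_{k}(\omega\gamma^{*},\ell)_{\mathbf{b}}\leq t/\ell$. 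Taking absolute values, summing, and invoking $\gamma\in Z_{n}$ (so that $\lVert\mathrm{P}_{k}(\gamma)-\mathbf{q}\rVert_{1}\leq 1/n$) yields
\[
\lVert\mathrm{P}_{k}(\omega\gamma^{*},\ell)-\mathbf{q}\rVert_{1}\;\leq\;\tfrac{1}{n}+(1-\rho)+\frac{\sum_{\mathbf{b}}R_{\mathbf{b}}}{\ell}.
\]

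It then suffices to check that each of $t/\ell$, $T/\ell$ and $sk/\ell$ is at most $1/n$. The inequality $s\geq n$ combined with $T\geq knN^{k}$ gives $sk/\ell\leq k/T\leq 1/(nN^{k})$; the inequality $\ell\geq Tn$ (immediate from $L$) gives $T/\ell\leq 1/n$; and the less obvious bound $t/\ell\leq 1/n$ is exactly what the term $(t/k)\max(1,M^{k}/N^{k})$ in $L$ enforces, as a short case analysis (either $n$ dominates the inner max, or $M\geq N$, or $M<N$) shows that $L\geq tn$ in every situation. Plugging $1-\rho=(t+u)/\ell\leq 2/n$ and $\sum_{\mathbf{b}}R_{\mathbf{b}}/\ell\leq 3/n$ into the displayed bound yields the claimed $6/n$. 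The main obstacle is not any single estimate but the careful bookkeeping that separates the four kinds of contributions (inside a full $\gamma$, inside $\omega$, at one of the $\omega|\gamma$, $\gamma|\gamma$ or $\gamma|\gamma_{*}$ junctions, and inside $\gamma_{*}$) and in particular handles the ``large-digit'' blocks that can only occur near $\omega$, which is precisely what forces the factor $M^{k}/N^{k}$ to appear in the definition of $L$.
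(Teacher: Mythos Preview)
Your argument is correct and follows the same overall strategy as the paper: write the length-$\ell$ prefix as $\omega$ followed by $s$ complete copies of $\gamma$ and a short tail, compare $\mathrm{P}_k$ of that prefix with $\rho\,\mathrm{P}_k(\gamma)$ where $\rho=sT/\ell$, use $\gamma\in Z_n$ for the $\mathrm{P}_k(\gamma)-\mathbf q$ part, and then check that the hypothesis $\ell\geq L$ forces every leftover term to be $O(1/n)$.

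The one substantive difference lies in how the remainder is controlled. The paper bounds each coordinate $\lvert\mathrm{P}(\sigma,\mathbf i)-\rho\,\mathrm{P}(\gamma,\mathbf i)\rvert$ by the single worst case $(t+qk+s)/\ell$ and then \emph{multiplies} by $N^k$ for blocks in $\{1,\dots,N\}^k$ and by $M^k$ for blocks containing a large digit; this crude counting is precisely what makes the factor $M^k/N^k$ indispensable in the paper's estimate. You instead sum the remainders directly, obtaining $\sum_{\mathbf b}R_{\mathbf b}\leq t+sk+T$, which is sharper and does not intrinsically need $M^k/N^k$; in your proof that factor only appears when you verify (via the short case analysis) that the given $L$ satisfies $L\geq tn$. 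So your route is a little more economical, while the paper's per-coordinate bound explains more transparently why $L$ was defined the way it was. Your closing remark that the large-digit blocks ``force'' the factor $M^k/N^k$ is therefore slightly off: in your own argument that factor is merely sufficient, not necessary.
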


\begin{proof}
We set $s:=\lvert\gamma\rvert$ and 
\[
\sigma=\omega\gamma^*\vert\ell.
\]
Furthermore we set $q$ and $0\leq r<s$ such that $m=t+q s+r$. Since an
occurrence can happen in $\omega$, in $\gamma$, somewhere in between or at the
end, for every $\mathbf{i}\in\N^k$ we clearly have that
\[
\frac{qs}{\ell}\mathrm{P}(\gamma,\mathbf{i})\leq
\mathrm{P}(\sigma,\mathbf{i})\leq\frac{qs\mathrm{P}(\gamma,\mathbf{i})}{\ell}+\frac{t+q(k-1)+r}{\ell}.
\]

Now we concentrate on the occurrences in multiples of $\gamma$ and show that we may
neglect those outside of $\gamma$, \textit{i.e.},
\[
\lVert\mathrm{P}_k(\sigma)-\mathbf{q}\rVert_1
\leq\lVert\mathrm{P}_k(\sigma)-\frac{qs}\ell \mathrm{P}_k(\gamma)\rVert_1+
\lVert\frac{qs}\ell \mathrm{P}_k(\gamma)-\mathbf{q}\rVert_1.
\]
We will estimate both parts separately. For the first one we get that
\begin{align*}
\lVert\mathrm{P}_k(\sigma)-\frac{qs}\ell
\mathrm{P}_k(\gamma)\rVert_1
&=\sum_{\mathbf{i}\in\{1,\ldots,N\}^k}\lvert\mathrm{P}(\sigma,\mathbf{i})-\frac{qs}\ell
\mathrm{P}(\gamma,\mathbf{i})\rvert+\sum_{\mathbf{i}\in\N^k\setminus\{1,\ldots,N\}^k}\lvert\mathrm{P}(\sigma,\mathbf{i})-\frac{qs}\ell
\mathrm{P}(\gamma,\mathbf{i})\rvert\\
&\leq\sum_{\mathbf{i}\in\{1,\ldots,N\}^k}\frac{t+qk+s}{\ell}+
\sum_{\substack{\mathbf{i}\in\N^k\setminus\{1,\ldots,N\}^k\\\mathrm{P}(\omega,\mathbf{i})\neq0}}\frac{t}{\ell}\\
&\leq N^k\frac{t+qk}{qnkN^k}+\frac1q+M^k\frac{t}{qnkN^k}\\
&=\frac1n+(c+1)\frac1q,
\end{align*}
where we have used that $\ell\geq qs\geq qnkN^k$ and written
\[
c=\frac{t}{nk}\left(1+\frac{M^k}{N^k}\right).
\]

For the second part we get that
\begin{align*}
\lVert\frac{qs}\ell \mathrm{P}_k(\gamma)-\mathbf{q}\rVert_1
&\leq\lVert\frac{qs}\ell\mathrm{P}_k(\gamma)-\mathrm{P}_k(\gamma)\rVert_1
+\lVert\mathrm{P}_k(\gamma)-\mathbf{q}\rVert_1\\
&\leq qs\lvert\frac1\ell-\frac1{qs}\rvert+\frac1n\\
&\leq \frac t\ell+\frac1n\leq\frac t{qnkN^k}+\frac1n.
\end{align*}

Putting these together yields
\[
\lVert\mathrm{P}_k(\sigma)-\mathbf{q}\rVert_1\leq
\frac1n+(c+1)\frac1q+\frac t{qnkN^k}+\frac1n.
\]

By our assumptions on the size of $\ell$ in \eqref{m:bound} this proves the
lemma.
\end{proof}


\section{Proof of Theorem \ref{thm:extremely_non_normal}}\label{sec:proof-extremely-non-normal}
The standard method of proof is to construct a subset $E$ of
$\mathbb{E}_k^{(r)}$ which is easier to handle and already
residual. In our construction of the set $E$ we mainly follow the
ideas of Hyde \textit{et al.} \cite{hyde10:_iterat_cesar_baire}. We
start by recursively defining the functions $\varphi_m$ for $m\ge1$ by
$\varphi_1(x)=2^x$ and $\varphi_m(x)=\varphi_1(\varphi_{m-1}(x))$ for
$m\geq2$. Furthermore we set
$\mathbb{D}=(\mathbb{Q}^{\left(\N^k\right)}\cap\mathrm{S}^*_k)$. Since
$\mathbb{D}$ is countable and dense in $\mathrm{S}^*_k$ and therefore
dense in $\mathrm{S}_k$ we may concentrate on the probability vectors
$\mathbf{q}\in\mathbb{D}$.

Now we say that a sequence $(\mathbf{x}_n)_n$ in $\R^{\left(\N^k\right)}$ has property $P$ if
for all $\mathbf{q}\in\mathbb{D}$, $m\in\N$, $i\in\N$, and
$\varepsilon>0$, there exists a $j\in\N$ satisfying:
\begin{enumerate}
\item $j\geq i$,
\item $j/2^j<\varepsilon$,
\item if $j<n<\varphi_m(j)$ then
  $\lVert\mathbf{x}_n-\mathbf{q}\rVert_1<\varepsilon$.
\end{enumerate}

Then we define our set $E$ to consist of all frequency vectors having property
$P$, \textit{i.e.} 
\[
E=\{x\in U_\infty:\text{ $(\mathrm{P}^{(0)}_k(x;n))_{n=1}^\infty$ has
  property $P$}\}.
\]

We will proceed in three steps showing that
\begin{enumerate}
\item $E$ is residual, 
\item if $(\mathrm{P}^{(r)}(x;n))_{n=1}^\infty$ has property $P$, then
  also $(\mathrm{P}^{(r+1)}(x;n))_{n=1}^\infty$ has property $P$, and
\item $E\subseteq\mathbb{E}^{(r)}_k$.
\end{enumerate}

\begin{lem}\label{lem:Eresidual}
The set $E$ is residual.
\end{lem}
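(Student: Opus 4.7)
The plan is to exhibit $E$ as a countable intersection of open dense subsets of $U_\infty$ and invoke the Baire category theorem; since $U_\infty$ is a dense $G_\delta$ in $M$, any residual subset of $U_\infty$ is residual in $M$. Enumerate the parameters of property $P$: for each quadruple $(\mathbf{q},m,i,\varepsilon)\in\mathbb{D}\times\N\times\N\times(\Q\cap(0,\infty))$ set
\[
E(\mathbf{q},m,i,\varepsilon)=\bigl\{x\in U_\infty:\text{some }j\in\N\text{ witnesses (1)--(3) for }(\mathrm{P}^{(0)}_k(x;n))_n\bigr\}.
\]
Then $E=\bigcap E(\mathbf{q},m,i,\varepsilon)$ is a countable intersection, so it suffices to show each $E(\mathbf{q},m,i,\varepsilon)$ is open and dense in $U_\infty$.

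Openness is straightforward. If $j$ witnesses $(\mathbf{q},m,i,\varepsilon)$ for $x$, then conditions (1)--(3) depend only on the finite prefix of length $\varphi_m(j)-1$ of $\pi^{-1}(x)$. Hence the relatively open cylinder $D_{\varphi_m(j)-1}(\pi^{-1}(x))\cap U_\infty$ is a neighbourhood of $x$ contained in $E(\mathbf{q},m,i,\varepsilon)$.

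Density is the heart of the argument. Fix a non-empty relatively open $V\subseteq U_\infty$ and, shrinking if necessary, assume $V\supseteq D_t(\omega_0)\cap U_\infty$ for some prefix $\omega_0\vert t$. Choose $n$ with $6/n<\varepsilon$; by Lemma \ref{lem:Znotempty} pick $\gamma\in Z_n(\mathbf{q},N,k)$, and form the infinite word $\omega^*=(\omega_0\vert t)\gamma^\infty$. Lemma \ref{lem:frequency:word} yields a threshold $L$ depending on $t,\gamma,n,N,k$ such that
\[
\bigl\lVert\mathrm{P}^{(0)}_k(\omega^*\vert\ell)-\mathbf{q}\bigr\rVert_1\leq\frac6n<\varepsilon\quad\text{for all }\ell\geq L.
\]
Since $j/2^j\to0$, one may select $j\geq\max(i,L)$ with $j/2^j<\varepsilon$; then conditions (1), (2) hold, and (3) follows because every $n$ with $j<n<\varphi_m(j)$ satisfies $n>j\geq L$. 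Finally, the non-empty open cylinder $D_{\varphi_m(j)-1}(\omega^*\vert\varphi_m(j)-1)$ meets the dense set $U_\infty$; any $x$ in this intersection lies in $V$, shares its first $\varphi_m(j)-1$ symbolic digits with $\omega^*$, and hence belongs to $E(\mathbf{q},m,i,\varepsilon)$.

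The main obstacle is coordinating the three conditions by a single $j$: the witness $j$ must be large enough both so that the tail bound from Lemma \ref{lem:frequency:word} is already in force ($j\geq L$) and so that $j/2^j<\varepsilon$, while still being finite so that only a finite initial segment of $\omega^*$ is needed; once this is done, realising $\omega^*$ by an actual point of $U_\infty$ is immediate from the density of $U_\infty$ inside every non-empty open cylinder.
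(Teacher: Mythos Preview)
Your proof is correct and follows essentially the same strategy as the paper: decompose $E$ as a countable intersection of sets indexed by the parameters of property $P$, then show each is open (because the witness $j$ fixes only a finite prefix) and dense (by appending iterates of a word from $Z_n$ via Lemmas~\ref{lem:Znotempty} and~\ref{lem:frequency:word}). The only cosmetic difference is that the paper discretises $\varepsilon$ through $1/h$ for $h\in\N$ while you use positive rationals, and the paper writes $\varphi_m(2^j)$ in its auxiliary property $P_{h,m,\mathbf{q},i}$ where you (consistently with the definition of property $P$) use $\varphi_m(j)$.
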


\begin{proof}
  For fixed $h,m,i\in\N$ and $\mathbf{q}\in\mathbb{D}$, we say that a
  sequence $(\mathbf{x}_n)_n$ in $\R^{\N^k}$ has property
  $P_{h,m,\mathbf{q},i}$ if for every $\varepsilon>1/h$, there exists
  $j\in\N$ satisfying:
\begin{enumerate}
\item $j\geq i$,
\item $j/2^j<\varepsilon$,
\item if $j<n<\varphi_m(2^j)$, then $\lVert\mathbf{x}_n-\mathbf{q}\rVert_1<\varepsilon$.
\end{enumerate}
Now let $E_{h,m,\mathbf{q},i}$ be the set of all points whose
frequency vector satisfies property $P_{h,m,\mathbf{q},i}$,
\textit{i.e.}
\[
E_{h,m,\mathbf{q},i}:=\left\{x\in U_{\infty}:\left(\mathrm{P}^{(0)}_k(x;n)\right)_{n=1}^\infty\text{ has
  property }P_{h,m,\mathbf{q},i}\right\}.
\]
Obviously we have that
\[
E=\bigcap_{h\in\N}\bigcap_{m\in\N}\bigcap_{\mathbf{q}\in\mathbb{D}}\bigcap_{i\in\N}E_{h,m,\mathbf{q},i}.
\]

Thus it remains to show, that $E_{h,m,\mathbf{q},i}$ is open and dense.
\begin{enumerate}
\item\textbf{$E_{h,m,\mathbf{q},i}$ is open.} Let $x\in
  E_{h,m,\mathbf{q},i}$, then there exists a $j\in\N$ such that $j\geq
  i$, $j/2^j<1/h$, and if $j<n<\varphi_m(2^j)$, then
  \[
  \lVert\mathrm{P}^{(1)}_k(x;n)-\mathbf{q}\rVert_1<1/h.
  \]

  Let $\omega\in X$ be such that $x=\pi(\omega)$ and set
  $t:=\varphi_m(2^j)$. Since $D_{t}(\omega)$ is open, there exists a
  $\delta>0$ such that the ball $B(x,\delta)\subseteq
  D_{t}(\omega)$. Furthermore, since by definition all $y\in
  D_{t}(\omega)$ have their first $t$ digits the same as $x$, we get
  that
  \[
  B(x,\delta)\subseteq D_{t}(\omega)\subseteq E_{h,m,\mathbf{q},i}.
  \]

\item\textbf{$E_{h,m,\mathbf{q},i}$ is dense.} Let $x\in U_\infty$ and
  $\delta>0$. We must find $y\in B(x,\delta)\cap E_{h,m,\mathbf{q},i}$.
  
  Let $\omega\in X$ be such that $x=\pi(\omega)$. Since
  $\diam\overline{D_t(\omega)}\to0$ for $t\to\infty$ and $x\in
  D_t(\omega)$ for $t\geq1$ there exists a $t$ such that $D_t(\omega)\subset
  B(x,\delta)$. Let $\sigma=\omega\vert t$ be the first $t$ digits of
  $x$.

  Now, an application of Lemma \ref{lem:Znotempty} with $n=6h$ yields that there exists a finite
  word $\gamma$ such that
  \[
    \lVert\mathrm{P}_k(\gamma)-\mathbf{q}\rVert_1\leq\frac1{6h}.
  \]
  
  Let $\varepsilon\geq\frac1h$ and $L$ be as in the statement of Lemma
  \ref{lem:frequency:word}. Then we choose $j$ such that
  \[
  \frac{j}{2^j}<\varepsilon\quad\text{and}\quad j\geq \max\left(L,i\right).
  \]
  An application of Lemma \ref{lem:frequency:word} with $n=6h$ then gives us that
  \[
    \lVert\mathrm{P}_k(\sigma\gamma^*\vert j)-\mathbf{q}\rVert_1\leq\frac6n=\frac1h\leq\varepsilon.
  \]

  Thus we choose $y\in D_{j}(\sigma\gamma^*)$. Then on the one hand
  $y\in D_{j}(\sigma\gamma^*)\subset D_{t}(\omega)\subset B(x,\delta)$ and on the other
  hand $y\in D_{j}(\sigma\gamma^*)\subset E_{h,m,\mathbf{q},i}$

\end{enumerate}
It follows that $E$ is the countable intersection of open and dense
sets and therefore $E$ is residual in $U_\infty$.
\end{proof}

\begin{lem}\label{lem:propertyP}
Let $\omega\in X_{\mathcal{P},\phi}$. If
$(\mathrm{P}^{(r)}(\omega,n))_{n=1}^\infty$ has property $P$, then
also $(\mathrm{P}^{(r+1)}(\omega,n))_{n=1}^\infty$ has property $P$.
\end{lem}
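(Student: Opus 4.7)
The plan is to derive property $P$ at level $r+1$ directly from property $P$ at level $r$ by a single parameter substitution. The key algebraic identity doing the work is $\varphi_m(2^j)=\varphi_{m+1}(j)$, which follows at once from the fact that $\varphi_m$ is the $m$-fold iterate of $\varphi_1$ (so $\varphi_m\circ\varphi_1=\varphi_{m+1}$). This identity lets us spend exactly one level of the $\varphi$-tower to absorb the cost of one additional Cesàro averaging.

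Fix $\mathbf{q}\in\mathbb{D}$, $m,i\in\N$ and $\varepsilon>0$ for which property $P$ must be verified for the sequence $(\mathrm{P}^{(r+1)}_k(\omega,n))_n$. First I would choose an auxiliary integer $i'\geq i$ large enough that also $2^{i'}/2^{2^{i'}}<\varepsilon$, and then invoke the hypothesised property $P$ for $(\mathrm{P}^{(r)}_k(\omega,n))_n$ with the replaced parameters $(\mathbf{q},\,m+1,\,i',\,\varepsilon/3)$. This produces an integer $j\geq i'$ with $j/2^j<\varepsilon/3$ such that $\lVert\mathrm{P}^{(r)}_k(\omega,n)-\mathbf{q}\rVert_1<\varepsilon/3$ for every $n\in(j,\varphi_{m+1}(j))$.

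Now set $j^{\ast}:=2^j$. By the identity above, $\varphi_m(j^{\ast})=\varphi_{m+1}(j)$, so any $n$ with $j^{\ast}<n<\varphi_m(j^{\ast})$ automatically lies in $(j,\varphi_{m+1}(j))$, and the same holds for every $s$ with $j<s\leq n$. Splitting the Cesàro sum at the cutoff $s=j$ yields
\[
\mathrm{P}^{(r+1)}_k(\omega,n)-\mathbf{q}=\frac{1}{n}\sum_{s=1}^{j}\bigl(\mathrm{P}^{(r)}_k(\omega,s)-\mathbf{q}\bigr)+\frac{1}{n}\sum_{s=j+1}^{n}\bigl(\mathrm{P}^{(r)}_k(\omega,s)-\mathbf{q}\bigr).
\]
Using the trivial uniform bound $\lVert\mathrm{P}^{(r)}_k(\omega,s)-\mathbf{q}\rVert_1\leq 2$ for the head (both vectors have total mass at most $1$) and the level-$r$ bound $\varepsilon/3$ for the tail, the right-hand side has $1$-norm at most $2j/n+(n-j)/n\cdot\varepsilon/3\leq 2j/2^j+\varepsilon/3<\varepsilon$, which verifies condition~(3) for $j^{\ast}$. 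Conditions (1) and (2) follow directly from the choice of $i'$: $j^{\ast}=2^j\geq 2^{i'}\geq i$, and $j^{\ast}/2^{j^{\ast}}=2^j/2^{2^j}\leq 2^{i'}/2^{2^{i'}}<\varepsilon$.

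The main conceptual point, and essentially the only obstacle, is recognising that one must trade one level of the $\varphi$-tower—passing from $m$ to $m+1$ when invoking the hypothesis—in order to absorb the $O(j)$ head of the Cesàro sum, and that the choice $j^{\ast}=2^j=\varphi_1(j)$ precisely realises this trade via the identity $\varphi_m\circ\varphi_1=\varphi_{m+1}$. Everything else is routine arithmetic bookkeeping.
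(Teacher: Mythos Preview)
Your proof is correct and follows essentially the same approach as the paper: invoke property $P$ at level $r$ with $m$ replaced by $m+1$ and $\varepsilon$ by $\varepsilon/3$, set the new index to be $2$ raised to the old one, and split the Ces\`aro sum at the old index. Two minor remarks: your auxiliary choice of $i'$ is not actually needed, since $j\geq 1$ already gives $2^j\geq j$ and hence $j^{\ast}/2^{j^{\ast}}=2^j/2^{2^j}\leq j/2^j<\varepsilon/3<\varepsilon$, which verifies condition~(2) directly; and the paper bounds the head by separating $\lVert\mathrm{P}^{(r)}_k(\omega,s)\rVert_1\leq 1$ and $\lVert\mathbf{q}\rVert_1=1$ rather than using your combined bound $\lVert\mathrm{P}^{(r)}_k(\omega,s)-\mathbf{q}\rVert_1\leq 2$, but this is cosmetically the same estimate.
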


This is Lemma 2.2 of \cite{hyde10:_iterat_cesar_baire}. However, the
proof is short so we present it here for completeness.

\begin{proof}
Let $\omega\in X_{\mathcal{P},\phi}$ be such that
$(\mathrm{P}_k^{(r)}(\omega;n))_{n=1}^\infty$ has property 
$P$, and fix $\varepsilon>0,\mathbf{q}\in\mathbb{D}$, $i\in\N$ and
$m\in\N$. Since $(\mathrm{P}_k^{(r)}(\omega,n))_{n=1}^\infty$ has property
$P$, there exists $j'\in\N$ with $j'\geq i$,
$j'/2^{j'}<\varepsilon/3$, and such that for
$j'<n<\varphi_{m+1}(2^{j'})$ we have that
$\lVert\mathrm{P}_k^{(r)}(\omega,n)-\mathbf{q}\rVert_1<\varepsilon/3$.

We set $j=2^{j'}$ and show that
$(\mathrm{P}_k^{(r+1)}(\omega,n))_{n=1}^\infty$ has property $P$ with this
$j$. For all $j<n<\varphi_m(2^j)$ (\textit{i.e.}
$2^{j'}<n<\varphi_{m+1}(2^{j'})$), we have
\begin{align*}
\lVert\mathrm{P}_k^{(r+1)}(\omega,n)-\mathbf{q}\rVert_1
&=\lVert\frac{\mathrm{P}_k^{(r)}(\omega,1)+\mathrm{P}_k^{(r)}(\omega,2)+\cdots+\mathrm{P}_k^{(r)}(\omega,n)}{n}-\mathbf{q}\rVert_1\\
&=\lVert\frac{\mathrm{P}_k^{(r)}(\omega,1)+\mathrm{P}_k^{(r)}(\omega,2)+\cdots+\mathrm{P}_k^{(r)}(\omega,j')}{n}\right.\\
&\quad+\left.\frac{\mathrm{P}_k^{(r)}(\omega,j'+1)+\mathrm{P}_k^{(r)}(\omega,j'+2)+\cdots+\mathrm{P}_k^{(r)}(\omega,n)-(n-j')\mathbf{q}}{n}-\frac{j'\mathbf{q}}{n}\rVert_1\\
&\leq\frac{\lVert\mathrm{P}_k^{(r)}(\omega,1)+\mathrm{P}_k^{(r)}(\omega,2)+\cdots+\mathrm{P}_k^{(r)}(\omega,j')\rVert_1}{n}\\
&\quad+\frac{\lVert\mathrm{P}_k^{(r)}(\omega,j'+1)-\mathbf{q}\rVert_1+\cdots+\lVert\mathrm{P}_k^{(r)}(\omega,n)-\mathbf{q}\rVert_1}{n}
  -\frac{\lVert j'\mathbf{q}\rVert_1}{n}\\
&\leq\frac{j'}{n}+\frac{\varepsilon}3\frac{n-j'}{n}+\frac{j'}{n}
\leq\frac{j'}{2^{j'}}+\frac{\varepsilon}3+\frac{j'}{2^{j'}}
\leq\frac{\varepsilon}3+\frac{\varepsilon}3+\frac{\varepsilon}3
=\varepsilon.
\end{align*}
\end{proof}

\begin{lem}\label{lem:Esubset}
The set $E$ is a subset of $\mathbb{E}^{(r)}_k$.
\end{lem}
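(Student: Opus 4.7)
The plan is to chain together two ingredients already available: (i) property $P$ is preserved when passing from the $r$-th to the $(r{+}1)$-st Ces\`aro iterate (Lemma \ref{lem:propertyP}), and (ii) the countable set $\mathbb{D}$ is dense in $\mathrm{S}_k$. Starting from the fact that $x\in E$ means $(\mathrm{P}_k^{(0)}(x,n))_n$ has property $P$, a straightforward induction on $r\ge 0$, invoking Lemma \ref{lem:propertyP} at each step, shows that $(\mathrm{P}_k^{(r)}(x,n))_n$ also has property $P$.

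The remaining task is to translate property $P$ for $(\mathrm{P}_k^{(r)}(x,n))_n$ into the statement that every shift-invariant probability vector is an accumulation point, i.e.\ that $x\in\mathbb{E}_k^{(r)}$. Fix an arbitrary $\mathbf{p}\in\mathrm{S}_k$ together with any $\varepsilon>0$ and any threshold $N_0\in\N$. By density of $\mathbb{D}$ in $\mathrm{S}_k$, pick $\mathbf{q}\in\mathbb{D}$ with $\lVert\mathbf{p}-\mathbf{q}\rVert_1<\varepsilon/2$. Now apply property $P$ to the sequence $(\mathrm{P}_k^{(r)}(x,n))_n$ with this $\mathbf{q}$, with $i=N_0$, with $m=1$ (say), and with $\varepsilon/2$ in place of $\varepsilon$. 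This yields $j\ge N_0$ such that every $n$ with $j<n<\varphi_1(j)=2^j$ satisfies $\lVert\mathrm{P}_k^{(r)}(x,n)-\mathbf{q}\rVert_1<\varepsilon/2$, and hence, by the triangle inequality, $\lVert\mathrm{P}_k^{(r)}(x,n)-\mathbf{p}\rVert_1<\varepsilon$. Since $2^j>j+1$, such $n$ exist and they lie above the prescribed threshold $N_0$.

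As $N_0$ and $\varepsilon$ were arbitrary, this exhibits infinitely many indices $n$ at which $\mathrm{P}_k^{(r)}(x,n)$ is arbitrarily close to $\mathbf{p}$, so $\mathbf{p}\in\mathrm{A}_k^{(r)}(x)$. Since $\mathbf{p}\in\mathrm{S}_k$ was arbitrary, we conclude $\mathrm{S}_k\subseteq\mathrm{A}_k^{(r)}(x)$, which is exactly the condition that $x\in\mathbb{E}_k^{(r)}$. I do not anticipate a real obstacle here: the inductive step is taken care of by Lemma \ref{lem:propertyP}, and the rest is a standard density-plus-triangle-inequality argument; the only point requiring care is making sure the interval $(j,\varphi_m(j))$ is nonempty for the chosen $j$, which is automatic.
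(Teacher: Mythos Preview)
Your proof is correct and follows essentially the same approach as the paper: iterate Lemma~\ref{lem:propertyP} to pass property $P$ from level $0$ to level $r$, then use density of $\mathbb{D}$ in $\mathrm{S}_k$ together with property $P$ and the triangle inequality to show every $\mathbf{p}\in\mathrm{S}_k$ is an accumulation point. The only cosmetic difference is that the paper parametrizes the approximation by $\varepsilon=1/h$ and extracts an increasing subsequence $(n_{h_u})_u$ at the end, whereas you work directly with an arbitrary $\varepsilon>0$ and threshold $N_0$; both yield the same conclusion.
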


\begin{proof}
We will show, that for any $x\in E$ we also have
$x\in\mathbb{E}^{(r)}_k$. To this end, let $x\in E$ and $\omega\in
X_{\mathcal{P},\phi}$ be the symbolic expansion of $x$, \textit{i.e.} $x=\pi(\omega)$. Since
$(\mathrm{P}_k^{(0)}(\omega,n))_n$ has property $P$, by iterating Lemma \ref{lem:propertyP}
we get that $(\mathrm{P}_k^{(r)}(\omega,n)$ has property $P$.

Thus it suffices to show that $\mathbf{p}$ is an accumulation point of
$(\mathrm{P}_k^{(r)}(\omega,n))_n$ for any $\mathbf{p}\in\mathrm{S}_k$. Therefore
we fix $h\in\N$ and, since $\mathbb{D}$ is dense in $\mathrm{S}_k$, we find a $\mathbf{q}\in\mathbb{D}$ such that
\[
\lVert\mathbf{p}-\mathbf{q}\rVert_1<\frac1h.
\]
Since $(\mathrm{P}_k^{(r)}(\omega,n))_n$ has property $P$ for any $m\in\N$ we
find $j\in\N$ with $j\geq h$ and such that if $j<n<\varphi_m(2^j)$ then
$\lVert\mathrm{P}_k^{(r)}(\omega,n)-\mathbf{q}\rVert_1<\frac1h$. Hence let $n_h$
be any integer with $j<n_h<\varphi_m(2^j)$, then
\[
\lVert\mathrm{P}_k^{(r)}(\omega,n_h)-\mathbf{q}\rVert_1<\frac1h.
\]

Thus each $n_h$ in the sequence $(n_h)_h$ satisfies
\[
\lVert\mathbf{p}-\mathrm{P}_k^{(r)}(\omega,n_h)\rVert_1
\leq\lVert\mathbf{p}-\mathbf{q}\rVert_1+\lVert\mathrm{P}_k^{(r)}(\omega,n_h)-\mathbf{q}\rVert_1<\frac2h.
\]
Since $n_h>h$ we may extract an increasing sub-sequence $(n_{h_u})_u$ such that
$\mathrm{P}_k^{(r)}(\omega,n_{h_u})\rightarrow\mathbf{p}$ for
$u\rightarrow\infty$. Thus $\mathbf{p}$ is an accumulation point of
$\mathrm{P}_k^{(r)}(\omega,n)$, which proves the lemma.
\end{proof}

\begin{proof}[Proof of Theorem  \ref{thm:extremely_non_normal}]
Since by Lemma \ref{lem:Eresidual} $E$ is residual in $U_\infty$ and
by Lemma \ref{lem:Esubset} $E$ is a subset of $\mathbb{E}^{(r)}_k$ we get that
$\mathbb{E}^{(r)}_k$ is residual in $U_\infty$. Again we note that
$M\setminus U_\infty$ is the countable union of nowhere dense sets and
therefore $\mathbb{E}^{(r)}_k$ is also residual in $M$. 
\end{proof}
 
\section{Proof of Theorem \ref{thm:maximal_frequency_oscillation}}\label{sec:proof-frequency-oscillation}
Following the proof of Liao, Ma and Wang
\cite{liao_ma_wang2008:dimension_some_non} it suffices to show that
$$\mathbb{E}^{(r)}\subset\mathbb{F}^{(r)}.$$

First the following lemma provides us with a suitable definition of
$\mathrm{A}^{(r)}(\omega,\mathbf{b})$.

\begin{lem}\label{lem:interval}
Let $r\geq0$ be an integer, $\omega\in X_{\mathcal{P},\phi}$ and
$\mathbf{b}\in\N^*$. Then
$$\mathrm{A}^{(r)}(\omega,\mathbf{b})=\left[\liminf_{n\to\infty}
  P^{(r)}(\omega,\mathbf{b},n),
\limsup_{n\to\infty} P^{(r)}(\omega,\mathbf{b},n)\right].$$
\end{lem}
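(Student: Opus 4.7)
The plan is to reduce the claim to the classical real-analysis fact that a bounded sequence $(x_n)$ in $\R$ with vanishing consecutive differences, i.e.\ $|x_{n+1}-x_n|\to0$, has its set of accumulation points equal to the closed interval $\bigl[\liminf_n x_n,\limsup_n x_n\bigr]$. Since $\mathrm{P}^{(r)}(\omega,\mathbf{b},n)\in[0,1]$ for every $n$, it suffices to establish that
\[
\bigl|\mathrm{P}^{(r)}(\omega,\mathbf{b},n+1)-\mathrm{P}^{(r)}(\omega,\mathbf{b},n)\bigr|\longrightarrow 0\qquad (n\to\infty),
\]
and this I would prove by induction on $r$.

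For the base case $r=0$, write $C_n=\lvert\{0\leq i<n:a_{i+1}\cdots a_{i+k}=\mathbf{b}\}\rvert$, so that $\mathrm{P}^{(0)}(\omega,\mathbf{b},n)=C_n/n$. Since $C_{n+1}-C_n\in\{0,1\}$, a direct manipulation yields $\lvert\mathrm{P}^{(0)}(\omega,\mathbf{b},n+1)-\mathrm{P}^{(0)}(\omega,\mathbf{b},n)\rvert\leq 2/(n+1)$, which tends to $0$. For the inductive step, the defining recursion gives the telescoping identity
\[
\mathrm{P}^{(r+1)}(\omega,\mathbf{b},n+1)-\mathrm{P}^{(r+1)}(\omega,\mathbf{b},n)
=\frac{\mathrm{P}^{(r)}(\omega,\mathbf{b},n+1)-\mathrm{P}^{(r+1)}(\omega,\mathbf{b},n)}{n+1},
\]
and since both terms in the numerator lie in $[0,1]$ the left-hand side is bounded by $1/(n+1)$, completing the induction. (Note that the induction hypothesis on $\mathrm{P}^{(r)}$ is not even needed here; all one uses is boundedness, which propagates trivially.)

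For the classical topological fact, one inclusion is immediate: any accumulation point of a bounded sequence lies in $[\liminf,\limsup]$. For the reverse inclusion, fix $c\in(\liminf,\limsup)$ and $\varepsilon>0$; since there are arbitrarily large $n$ with $x_n<c-\varepsilon/2$ and arbitrarily large $n$ with $x_n>c+\varepsilon/2$, and since the increments $x_{n+1}-x_n$ eventually have magnitude less than $\varepsilon/2$, a discrete intermediate-value argument produces infinitely many $n$ with $|x_n-c|<\varepsilon$, showing $c$ is an accumulation point. The endpoints $\liminf$ and $\limsup$ are accumulation points by definition, so the whole closed interval is attained.

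No step presents a real obstacle: the induction on $r$ is a clean Cesàro-mean calculation and the classical fact is standard. The only thing to verify carefully is the telescoping identity used in the inductive step, which is a straightforward algebraic manipulation of the defining recursion for $\mathrm{P}^{(r+1)}$.
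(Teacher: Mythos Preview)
Your proposal is correct and follows essentially the same approach as the paper: both reduce the claim to showing that consecutive differences $\lvert\mathrm{P}^{(r)}(\omega,\mathbf{b},n+1)-\mathrm{P}^{(r)}(\omega,\mathbf{b},n)\rvert$ tend to zero, handle $r=0$ by a direct count, and treat $r\geq1$ using only the boundedness of $\mathrm{P}^{(r)}$ in $[0,1]$. Your telescoping identity is in fact the same computation the paper carries out, just packaged differently (the paper writes the numerator as $\tfrac{1}{n}\sum_{j=1}^{n}\bigl(\mathrm{P}^{(r)}(\omega,\mathbf{b},n+1)-\mathrm{P}^{(r)}(\omega,\mathbf{b},j)\bigr)$, which is exactly $\mathrm{P}^{(r)}(\omega,\mathbf{b},n+1)-\mathrm{P}^{(r+1)}(\omega,\mathbf{b},n)$); the only cosmetic difference is that you state the classical accumulation-point fact explicitly while the paper leaves it implicit.
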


\begin{proof}
It suffices to show that the gaps between two consecutive frequencies
tend to zero, \textit{i.e.}
$$\lim_{n\to\infty}\left(\mathrm{P}^{(r)}(\omega,\mathbf{b},n+1)-\mathrm{P}^{(r)}(\omega,\mathbf{b},n)\right)=0.$$

For $r=0$ a direct upper and lower estimate for the number of
occurrences yields
$$\lvert\mathrm{P}^{(0)}(\omega,\mathbf{b},n+1)-\mathrm{P}^{(0)}(\omega,\mathbf{b},n)\rvert\leq\frac1{n+1}.$$

Since, for $i,j\geq1$ and $r\geq0$, we have that $$\lvert
\mathrm{P}^{(r)}(\omega,\mathbf{b},i)-\mathrm{P}^{(r)}(\omega,\mathbf{b},j)\rvert\leq1,$$
we get by the definition of $\mathrm{P}^{(r+1)}(\omega,\mathbf{b},n)$ that
\begin{align*}
\lvert
\mathrm{P}^{(r+1)}(\omega,\mathbf{b},n+1)-\mathrm{P}^{(r+1)}(\omega,\mathbf{b},n)\rvert
\leq\frac{\sum_{j=1}^n\lvert
  \mathrm{P}^{(r)}(\omega,\mathbf{b},n+1)-\mathrm{P}^{(r)}(\omega,\mathbf{b},j)\rvert}{(n+1)n}\leq\frac1{n+1}.
\end{align*}
\end{proof}

Let $\mathbf{b}=b_1b_2\ldots b_k\in\N^*$ be a word of length
$k$. Then we denote by $\mathrm{per}(\mathbf{b})$ the basic period of
$\mathbf{b}$, \textit{i.e.}
$$\mathrm{per}(\mathbf{b}):=\min\{p\leq k\colon b_{p+j}=b_j\text{ for
}1\leq j\leq k-p\}.$$ Furthermore we call the factor
$\widetilde{\mathbf{b}}:=b_1\ldots b_{\mathrm{per}(\mathbf{b})}$ the basic
factor. Then we have the following
\begin{lem}\label{lem:basic_factor}
  Let $r\geq0$ be an integer and $\mathbf{b}\in\N^*$ be a finite word
  with basic period $p$ and basic factor
  $\widetilde{\mathbf{b}}=b_1\ldots b_p$. Then, for each
  $n\geq2$, $$\lim_{n\to\infty}P^{(r)}(\widetilde{\mathbf{b}}^*,\mathbf{b},n)=\frac1p.$$
\end{lem}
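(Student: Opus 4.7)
The proof will proceed in three stages: a combinatorial analysis of where $\mathbf{b}$ occurs in $\widetilde{\mathbf{b}}^*$, the base case $r=0$, and a short induction lifting the conclusion to all $r\geq 0$.

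The first stage is the substantive one. I would establish that in the periodic sequence $\widetilde{\mathbf{b}}^*$, the block $\mathbf{b}$ begins at position $i+1$ if and only if $p \mid i$. The ``if'' direction is immediate: since $\mathbf{b}$ has period $p$ and $\widetilde{\mathbf{b}}^*$ has period $p$, the $k$ letters starting at any position $jp+1$ reproduce $\mathbf{b}$ letter for letter. For the ``only if'' direction I would argue by contradiction: if $\mathbf{b}$ also occurred at some position $s+1$ with $0 < s < p$, then comparing the first $p$ letters starting at that position with the prefix $\widetilde{\mathbf{b}}$ of $\mathbf{b}$ would yield $b_{s+j}=b_j$ for $1 \leq j \leq p-s$, exhibiting $s$ as a period of $\widetilde{\mathbf{b}}$. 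This period would then transfer to $\mathbf{b}$, contradicting the minimality of $p$. This step is essentially the only place where the \emph{basic} (i.e.\ minimal) period hypothesis is used.

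Given the first stage, the base case $r=0$ follows by counting: the number of $i \in \{0,1,\ldots,n-1\}$ with $p \mid i$ equals $\lceil n/p\rceil$, so
$$\mathrm{P}^{(0)}(\widetilde{\mathbf{b}}^*,\mathbf{b},n)=\frac{\lceil n/p \rceil}{n}\longrightarrow\frac{1}{p}.$$
For the inductive step, I would use that the recursive definition expresses $\mathrm{P}^{(r+1)}(\widetilde{\mathbf{b}}^*,\mathbf{b},n)$ as the $n$-th arithmetic mean of the sequence $\bigl(\mathrm{P}^{(r)}(\widetilde{\mathbf{b}}^*,\mathbf{b},j)\bigr)_{j\geq 1}$. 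The classical Cauchy--Ces\`aro theorem (arithmetic means inherit the limit of a convergent sequence) then immediately propagates the limit $1/p$ from $r$ to $r+1$.

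The main (and only) real obstacle is the combinatorial argument in the first stage; the remaining two reduce to routine counting and the Stolz--Ces\`aro principle. Some care is needed to verify that a period of $\widetilde{\mathbf{b}}$ genuinely transfers back to $\mathbf{b}$, but this follows at once from the fact that $\widetilde{\mathbf{b}}$ is by construction the basic factor of $\mathbf{b}$, so any non-trivial period of the former would contradict the definition of $p$.
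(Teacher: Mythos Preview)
Your inductive step agrees with the paper: both invoke the permanence of limits under Ces\`aro averaging to pass from $r$ to $r+1$. For the base case $r=0$ the paper simply quotes Lemma~2.2 of Liao--Ma--Wang, so your self-contained combinatorial treatment is in fact more informative than what the paper provides.

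There is, however, a genuine gap in your ``only if'' direction. From an occurrence of $\mathbf{b}$ at position $s+1$ (with $0<s<p$) you deduce $b_{s+j}=b_j$ for $1\le j\le p-s$, i.e.\ that $s$ is a period of $\widetilde{\mathbf{b}}$, and then assert that ``any non-trivial period of the former would contradict the definition of $p$.'' This last claim is false: take $\mathbf{b}=abaab$, which has basic period $p=3$ and basic factor $\widetilde{\mathbf{b}}=aba$; the latter has the non-trivial period $2$, yet $\mathrm{per}(\mathbf{b})=3$. A period of $\widetilde{\mathbf{b}}$ therefore need not transfer to $\mathbf{b}$.

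The repair is short. Write $c=\widetilde{\mathbf{b}}^{*}$. An occurrence of $\mathbf{b}$ at position $s+1$ gives $c_{s+j}=c_j$ for $1\le j\le k$; since $c$ already has period $p\le k$, this equality propagates to all $j\ge 1$, so $c$ has period $d:=\gcd(s,p)<p$. As $\mathbf{b}=c_1\ldots c_k$, one obtains $b_{d+j}=c_{d+j}=c_j=b_j$ for $1\le j\le k-d$, i.e.\ $d$ is a period of $\mathbf{b}$, contradicting $\mathrm{per}(\mathbf{b})=p$. With this correction your three-stage plan goes through; the counting and the Ces\`aro step are unproblematic.
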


\begin{proof}
  For $r=0$ this is Lemma 2.2 of
  \cite{liao_ma_wang2008:dimension_some_non}. The case $r\geq1$
  follows,
  since $$\lim_{n\to\infty}P^{(r)}(\widetilde{\mathbf{b}}^*,\mathbf{b},n)
  =\lim_{n\to\infty}P^{(r-1)}(\widetilde{\mathbf{b}}^*,\mathbf{b},n)=\cdots=
  \lim_{n\to\infty}P^{(0)}(\widetilde{\mathbf{b}}^*,\mathbf{b},n)=\frac1p.$$
\end{proof}

Now we have enough tools to state the proof of Theorem
\ref{thm:maximal_frequency_oscillation}.

\begin{proof}[Proof of Theorem \ref{thm:maximal_frequency_oscillation}]
  Let $\omega\in\mathbb{E}^{(r)}$ and $\mathbf{b}=b_1\ldots
  b_k\in\N^*$ be a finite word with basic period $p$. Then
  Lemma~\ref{lem:interval} and Lemma~\ref{lem:basic_factor} imply
  that $$\mathrm{A}^{(r)}(\mathbf{b})=\left[0,\tfrac1p\right].$$
  Therefore in order to prove that $\omega\in\mathbb{F}^{(r)}$ it
  suffices to show that $0$ and $\tfrac1p$ are limit points of
  $(\mathrm{P}^{(r)}(\omega,\mathbf{b},n))_{n\in\N}$. Furthermore,
  since $\omega\in\mathbb{E}$, for any $\varepsilon>0$ and
  $\mathbf{q}\in\mathrm{S}_k$ we have
  $\lVert\mathrm{P}_k^{(r)}(\omega,n))-\mathbf{q}\rVert_1$ for
  infinitely many $n$. Thus it suffices to find two suitable
  probability vectors $\mathbf{q}$ providing the limit points $0$ and
  $\frac1p$ for
  $\left(\mathrm{P}^{(r)}(\omega,\mathbf{b},n)\right)_{n\in\N}$.

\begin{itemize}
\item \textbf{$0$ is limit point.} We chose a digit $d$ which is
bigger than any digit in $\mathbf{b}$, \textit{i.e.}
$d>\max\left\{w_i\colon1\leq i\leq k\right\}$. Then we define the probability vector
$\mathbf{q}=(q_\mathbf{i})_{\mathbf{i}\in\N^k}$ by
\begin{gather*}
q_\mathbf{i}=\begin{cases}
1&\text{if }\mathbf{i}=\underbrace{d\ldots d}_{k\text{ times}},\\
0&\text{else}.
\end{cases}
\end{gather*}
We clearly have that $\mathbf{q}\in\mathrm{S}_k$. Since
$\mathrm{P}^{(r)}(\omega,\mathbf{b},n))<\varepsilon$ infinitely often,
we have that $0$ is a limit point.

\item \textbf{$\tfrac1p$ is limit point.} We note that
$\gamma=\mathbf{b}^\infty$ is a periodic point with minimal period $p$
under the map $\phi$. Let $\mu$ be the periodic orbit measure, which
has mass $\frac1p$ at each of the points
$\{\gamma,\phi\gamma,\ldots,\phi^{p-1}\gamma\}$. Then $\mu$ is
shift-invariant and induces a shift-invariant probability vector
$$\mathbf{q}=(q_\mathbf{b})_{\mathbf{b}\in\N^k}=(\mu(\mathbf{b}) )_{\mathbf{b}\in\N^k}.$$
Since $\mathbf{q}\in\mathrm{S}_k$ and
$q_\mathbf{b}=\mu(\mathbf{b})=\frac1p$, we have
$\lvert\mathrm{P}^{(r)}(\omega,\mathbf{b},n)-\frac1p\rvert<\varepsilon$
infinitely often. Therefore $\frac1p$ is also a limit point.
\end{itemize}

\end{proof}

\providecommand{\bysame}{\leavevmode\hbox to3em{\hrulefill}\thinspace}
\providecommand{\MR}{\relax\ifhmode\unskip\space\fi MR }
\providecommand{\MRhref}[2]{%
  \href{http://www.ams.org/mathscinet-getitem?mr=#1}{#2}
}
\providecommand{\href}[2]{#2}

\end{document}